\theoremstyle{plain}
\newtheorem{thm}{Theorem}[section]
\newtheorem{proposition}[thm]{Proposition}
\newtheorem{lemma}[thm]{Lemma}
\newtheorem{theorem}[thm]{Theorem}
\newtheorem{lem}[thm]{Lemma}
\newtheorem{prop}[thm]{Proposition}
\theoremstyle{definition}
\newtheorem{rem}[thm]{Remark}
\newcommand{\Tr}{\operatorname{Tr}}
\newcommand{\TR}{\operatorname{TR}}
\newcommand{\tr}{\operatorname{tr}}
\newcommand{\res}{\operatorname{res}}
\begin{document}
\title[Classical and quantum
ergodicity on orbifolds]{Classical and quantum ergodicity on
orbifolds}
\author{Yuri A. Kordyukov}
\address{Institute of Mathematics, Russian Academy of Sciences, 112
Chernyshevsky street, 450008 Ufa, Russia}
\email{yurikor@matem.anrb.ru}

\thanks{The author was partially supported by Grant 10-01-00088-a of the Russian Foundation of
Basic Research and Grant 2010-220-01-077 (contract 11.G34.31.0005)
of the Russian Government}

\keywords{}

\subjclass[2000]{Primary: 58J51; Secondary: 37D40, 58J42}

\begin{abstract}
We extend to orbifolds classical results on quantum ergodicity due
to Shnirelman, Colin de Verdi\`ere and Zelditch, proving that, for
any positive, first-order self-adjoint elliptic pseudodifferential
operator $P$ on a compact orbifold $X$ with positive principal
symbol $p$, ergodicity of the Hamiltonian flow of $p$ implies
quantum ergodicity for the operator $P$. We also prove ergodicity of
the geodesic flow on a compact Riemannian orbifold of negative
sectional curvature.
\end{abstract}
\date{}
 \maketitle

%\tableofcontents

\section{Introduction and main results}

Orbifolds were first introduced in the 1950s by Satake as
topological spaces generalizing smooth manifolds. Since then,
orbifolds became clearly important objects both in mathematics and
in mathematical and theoretical physics. The main purpose of this
paper is to generalize  to orbifolds some basic results on quantum
and classical ergodicity. We refer the reader to \cite{Zelditch10}
for a survey of recent results on quantum ergodicity.

Let $X$ be a compact orbifold, and $P$ a positive, first-order
self-adjoint elliptic pseudodifferential operator on $X$ with
positive principal symbol $p\in S^1(T^*X)$. (Here $S^m(T^*X)$
denotes the space of smooth functions on $T^*X\setminus\{0\}$,
homogeneous of degree $m$ with respect to a fiberwise $\mathbb
R$-action on $T^*X$.) As an example, one can consider
$P=\sqrt{\Delta_X}$, where $\Delta_X$ is the Laplace-Beltrami
operator associated to a Riemannian metric $g_X$ on $X$.

The spectrum of $P$ is discrete, and there is an orthonormal basis
of eigenfunctions $\{\psi_j\}$ with corresponding eigenvalues
$\{\lambda_j\}$:
\[
P\psi_j=\lambda_j\psi_j, \quad \lambda_1\leq \lambda_2\leq\cdots.
\]

Let $f_t$ be the bicharacteristic flow of the operator $P$, that is,
the restriction of the Hamiltonian flow of $p$ to
$S^*_pX=\{(x,\xi)\in T^*X : p(x,\xi)=1\}$. In particular, the
bicharacteristic flow of the operator $P=\sqrt{\Delta_X}$ is the
geodesic flow of the metric $g_X$ on the cosphere bundle
$S^*X=\{(x,\xi)\in T^*X : |\xi|=1\}$.

For any $A\in \Psi^0(X)$, we denote by $\sigma_A\in S^0(T^*X)$ the
principal symbol of $A$. Define a functional $\omega$ on $\Psi^0(X)$
by
\[
\omega(A)=\frac{1}{vol(S^*_pX)}\int_{S^*_pX}\sigma_A\,d\mu,
\]
where $d\mu$ is the Liouville measure on $S^*_pX$.

The next theorem is an orbifold version of the quantum ergodicity
theorem of Shnirelman, Colin de Verdi\`ere, and Zelditch
\cite{Sh,CdV,Zelditch87}.

\begin{thm}\label{t:main1}
If the flow $f_t$ is ergodic on $(S^*_pX,d\mu)$ then there is a
subsequence $\{\psi_{j_k} \}$ of density one such that for any $A\in
\Psi^0(X)$
\[
\lim_{k\to\infty}\langle A\psi_{j_k}, \psi_{j_k}\rangle=\omega(A).
\]
\end{thm}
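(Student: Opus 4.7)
The plan is to adapt the classical Shnirelman--Colin de Verdi\`ere--Zelditch argument to the orbifold setting. The main ingredients I will need are: (i) a pseudodifferential calculus on the orbifold $X$, (ii) an Egorov theorem for conjugation by $U_t=e^{-itP}$, and (iii) a local Weyl law of the form
\[
\frac{1}{N(\lambda)}\sum_{\lambda_j\le\lambda}\langle A\psi_j,\psi_j\rangle \longrightarrow \omega(A)
\qquad \text{as } \lambda\to\infty,
\]
for every $A\in\Psi^0(X)$, where $N(\lambda)=\#\{j:\lambda_j\le\lambda\}$. Each of these tools is standard on smooth manifolds and should extend to orbifolds by working in uniformizing charts and averaging symbols and half-densities over the local isotropy groups. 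Building this machinery is the main technical burden; once it is in place, the argument proceeds as in the manifold case.

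Assuming these tools, I reduce to symbols of mean zero by considering $A_0=A-\omega(A)\,\mathrm{Id}$; the goal becomes producing a density-one subsequence $\{\psi_{j_k}\}$ along which $\langle A_0\psi_{j_k},\psi_{j_k}\rangle\to 0$. For this I introduce the time average
\[
\langle A_0\rangle_T=\frac{1}{T}\int_0^T U_{-t}A_0 U_t\,dt,
\]
which by Egorov belongs to $\Psi^0(X)$ modulo lower order and has principal symbol $\frac{1}{T}\int_0^T \sigma_{A_0}\circ f_t\,dt$. Classical ergodicity of $f_t$ on $(S^*_pX,d\mu)$ together with the von Neumann mean ergodic theorem forces this averaged symbol to tend to $0$ in $L^2(S^*_pX,d\mu)$ as $T\to\infty$.

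Applying the local Weyl law to the non-negative operator $\langle A_0\rangle_T^*\langle A_0\rangle_T\in\Psi^0(X)$, and using that $\langle\langle A_0\rangle_T\psi_j,\psi_j\rangle=\langle A_0\psi_j,\psi_j\rangle$ since each $\psi_j$ is an eigenfunction of $P$, one gets
\[
\limsup_{\lambda\to\infty}\frac{1}{N(\lambda)}\sum_{\lambda_j\le\lambda}|\langle A_0\psi_j,\psi_j\rangle|^2 \;\le\; \frac{1}{\mathrm{vol}(S^*_pX)}\int_{S^*_pX}\Bigl|\tfrac{1}{T}\int_0^T \sigma_{A_0}\circ f_t\,dt\Bigr|^2 d\mu,
\]
and letting $T\to\infty$ forces the left-hand side to be $0$. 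A Chebyshev argument then produces, for each such $A$, a density-one subsequence along which $\langle A\psi_{j_k},\psi_{j_k}\rangle\to\omega(A)$, and the standard Koopman--von Neumann diagonal extraction over a countable $\|\cdot\|$-dense family of zeroth-order operators, combined with a $3\varepsilon$ approximation, yields a single density-one subsequence that works for every $A\in\Psi^0(X)$.

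The hardest part, I expect, is verifying Egorov's theorem and the local Weyl law in sufficient generality on orbifolds. For the Weyl law, one must control the asymptotics of $\Tr(A\,E_\lambda)$, where $E_\lambda$ is the spectral projector of $P$ onto $[0,\lambda]$; the contributions from the singular strata of $X$ need to be shown to be of lower order so that the leading asymptotics retain the same form as in the smooth case. Egorov's theorem reduces, via a partition of unity subordinate to uniformizing charts, to the manifold statement applied to invariant symbols, using the fact that the Hamiltonian flow of the invariant symbol $p$ commutes with the local orbifold group actions.
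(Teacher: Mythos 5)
Your argument is correct, and it rests on the same two pillars the paper uses --- an Egorov theorem for orbifolds (Theorem~\ref{t:egorov-classical}) and a local Weyl law (Theorem~\ref{t:local-Weyl}) --- but it takes a different route once those are in hand. You re-run the classical Shnirelman--Colin de Verdi\`ere--Zelditch mechanism directly: reduce to mean-zero $A_0=A-\omega(A)I$, form the time average $\langle A_0\rangle_T$, use Egorov together with the von Neumann mean ergodic theorem for the classical flow to drive $\sigma_{\langle A_0\rangle_T}$ to $0$ in $L^2(S^*_pX,d\mu)$, apply the local Weyl law to $\langle A_0\rangle_T^*\langle A_0\rangle_T$ together with Cauchy--Schwarz and the identity $\langle\langle A_0\rangle_T\psi_j,\psi_j\rangle=\langle A_0\psi_j,\psi_j\rangle$ to bound the quantum variance, and finish with Chebyshev plus a diagonal extraction over a countable dense family. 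The paper instead feeds the same two inputs into Zelditch's abstract $C^*$-dynamical framework \cite{Zelditch96}: the local Weyl law shows $\omega_\lambda\to\omega$, making $(\mathcal{A},\mathbb{R},\alpha)$ a quantized Gelfand--Naimark--Segal system; Egorov identifies the classical limit as the abelian system $(C(S^*_pX),\mathbb{R},f_t^*)$; and then Theorems~\ref{t:main1} and~\ref{t:main2} follow at once from Theorems~1 and~2 of \cite{Zelditch96}. Your route is more self-contained and keeps the ergodic mechanism visible, whereas the paper's is more modular, avoids reproving the variance-and-extraction argument, and delivers the operator-average statement Theorem~\ref{t:main2} for free. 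A further difference of emphasis: you expect to establish the local Weyl law by controlling $\Tr(Ae^{itP})$ via a parametrix, while the paper proves it in Section~\ref{s:local-Weyl} through the meromorphic continuation of $\Tr(AP^z)$ using the Kontsevich--Vishik canonical trace, the noncommutative residue, and a Tauberian theorem.
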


This theorem can be reformulated in terms of the operator averages
as in \cite{Zelditch96}. For any $A\in \Psi^0(X)$ denote
\[
\langle A\rangle : = \lim_{T\to
\infty}\frac{1}{2T}\int_{-T}^Te^{itP}Ae^{-itP}\,dt,
\]
where the limit is taken in the weak operator topology.

Consider the eigenvalue distribution function $N(\lambda)$ of the
operator $P$:
\[
N(\lambda)=\#\{j:\lambda_j\leq\lambda\}.
\]
Let $E_\lambda$ denote the spectral projection of $P$, corresponding
to the semi-axis $(-\infty,\lambda]$. Thus, we have
\[
N(\lambda)=\operatorname{tr}E_\lambda.
\]

\begin{thm}\label{t:main2}
If the flow $f_t$ is ergodic on $(S^*_pX,d\mu)$ then for any $A\in
\Psi^0(X)$ we have
\[
\langle A\rangle =\omega(A)I+K,
\]
where
\[
\|E_\lambda K E_\lambda\|=o(N(\lambda)), \quad \lambda\to +\infty.
\]
\end{thm}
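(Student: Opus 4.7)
The plan is to deduce Theorem \ref{t:main2} from Theorem \ref{t:main1} via the operator-averaging argument of Zelditch \cite{Zelditch96}. The first step is to verify that the weak operator limit defining $\langle A\rangle$ exists and commutes with $P$. Using the spectral decomposition $e^{-itP}\psi_j=e^{-it\lambda_j}\psi_j$, one computes
\[
\langle e^{itP}Ae^{-itP}\psi_j,\psi_k\rangle=e^{it(\lambda_k-\lambda_j)}\langle A\psi_j,\psi_k\rangle,
\]
whose time average over $[-T,T]$ tends to $\langle A\psi_j,\psi_k\rangle$ when $\lambda_j=\lambda_k$ and to $0$ otherwise. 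The uniform bound $\|e^{itP}Ae^{-itP}\|=\|A\|$ then promotes this to convergence in the weak operator topology, and the resulting operator $\langle A\rangle$ has matrix entries vanishing between distinct eigenspaces of $P$. In particular, $K:=\langle A\rangle-\omega(A)I$ preserves each eigenspace of $P$.

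By writing $A=A_1+iA_2$ with $A_1,A_2\in\Psi^0(X)$ self-adjoint, it suffices to treat the case of self-adjoint $A$. Then $\langle A\rangle$ is self-adjoint and commutes with the self-adjoint $P$, so one may choose an orthonormal basis $\{\psi_j\}$ of eigenfunctions of $P$ that \emph{simultaneously} diagonalizes $\langle A\rangle$. In this basis, $\langle A\rangle\psi_j=\mu_j\psi_j$ with
\[
\mu_j=\langle\langle A\rangle\psi_j,\psi_j\rangle=\langle A\psi_j,\psi_j\rangle,
\]
so $K$ is diagonal with entries $\kappa_j:=\mu_j-\omega(A)$.

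Applying Theorem \ref{t:main1} to this particular basis yields a density-one subsequence $\{j_k\}$ along which $\kappa_{j_k}\to 0$. Combined with the uniform bound $|\kappa_j|\le 2\|A\|$, splitting the sum over $\{j:\lambda_j\le\lambda\}$ into $\{j_k\}$ and its density-zero complement gives, for every $q\ge 1$,
\[
\sum_{\lambda_j\le\lambda}|\kappa_j|^q=o(N(\lambda)),\qquad\lambda\to+\infty.
\]
Since in the chosen basis $E_\lambda K E_\lambda$ is a finite-rank diagonal operator with spectrum $\{\kappa_j:\lambda_j\le\lambda\}$, these estimates bound its Schatten norms, yielding $\|E_\lambda K E_\lambda\|=o(N(\lambda))$.

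The main subtlety is that Theorem \ref{t:main1} is invoked for the non-canonical basis simultaneously diagonalizing $\langle A\rangle$, rather than for the basis fixed at the beginning of the section. This is legitimate because the proof of Theorem \ref{t:main1} passes through the stronger variance estimate
\[
\sum_{\lambda_j\le\lambda}|\langle A\psi_j,\psi_j\rangle-\omega(A)|^2=o(N(\lambda)),
\]
obtained from the Egorov theorem, the local Weyl law, and Birkhoff's ergodic theorem applied to the averaged symbol $\frac{1}{2T}\int_{-T}^T\sigma_A\circ f_t\,dt$, and this variance estimate is valid for \emph{any} orthonormal basis of eigenfunctions of $P$. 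Granting this basis-independent bound, the scheme above proves Theorem \ref{t:main2}.
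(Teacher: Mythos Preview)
Your argument is correct and follows essentially the same route as the paper: both rely on the local Weyl law (Theorem~\ref{t:local-Weyl}) and the Egorov theorem (Theorem~\ref{t:egorov-classical}) as the two orbifold-specific inputs, and then feed these into Zelditch's abstract framework \cite{Zelditch96}. The only difference is one of presentation: the paper verifies that $(\mathcal{A},\mathbb{R},\alpha)$ is a quantized abelian system in the sense of \cite{Zelditch96} and then cites Theorem~2 there as a black box, whereas you unpack that black box by explicitly diagonalizing $\langle A\rangle$ in a joint eigenbasis and invoking the basis-independent variance estimate. Your care in flagging that the variance bound must hold for \emph{any} eigenbasis of $P$ (not just the one fixed at the outset) is exactly the point that makes the argument go through, and is implicit in Zelditch's abstract treatment.
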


To provide examples of ergodic geodesic flows on orbifold, we prove
the following extension of a classical result by Anosov
\cite{Anosov67}.

\begin{thm}\label{t:ergodic}
The geodesic flow on a compact Riemannian orbifold of negative
sectional curvature is ergodic.
\end{thm}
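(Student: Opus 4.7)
My plan is to reduce the statement to the classical Anosov--Hopf theorem via the orbifold universal cover. Since $X$ is a compact Riemannian orbifold of strictly negative sectional curvature, an orbifold version of the Cartan--Hadamard theorem shows that $X$ is good (developable) and its orbifold universal cover $\tilde X$ is a simply connected complete Riemannian manifold---a Hadamard manifold---of strictly negative sectional curvature, with no orbifold singularities. The orbifold fundamental group $\Gamma=\pi_1^{\mathrm{orb}}(X)$ acts on $\tilde X$ by isometries, properly discontinuously and cocompactly, with finite stabilizers at preimages of orbifold points. Identifying $X=\tilde X/\Gamma$ and $S^*X=S^*\tilde X/\Gamma$, the geodesic flow on $S^*X$ is the $\Gamma$-quotient of the geodesic flow $\tilde f_t$ on $S^*\tilde X$. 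Each $\gamma$-fixed set in $S^*\tilde X$ ($\gamma\neq e$) is a closed submanifold of positive codimension, and countability of $\Gamma$ implies that $\Gamma$ acts freely off a closed $\Gamma$-invariant set of Liouville measure zero.

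Compactness of $X$ gives uniform pinching of the sectional curvature of $\tilde X$ between two negative constants. Hence $\tilde f_t$ on $S^*\tilde X$ is uniformly hyperbolic: it possesses $\Gamma$-equivariant, transverse, H\"older continuous stable and unstable foliations $\cW^s,\cW^u$ with smooth leaves, built from stable/unstable Jacobi fields, that are absolutely continuous with respect to the Liouville measure and admit a local product structure. These statements are standard for the geodesic flow of any Riemannian manifold whose sectional curvature is uniformly pinched between negative constants and do not require global compactness of the ambient manifold.

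Then I apply the Hopf argument equivariantly. Let $F\in L^2(S^*X)$ be $f_t$-invariant; it suffices to show that for every continuous $g$ on the compact space $S^*X$ the forward Birkhoff average $g^+$, which exists almost everywhere by Birkhoff's theorem, is almost everywhere constant, since $F$ is the $L^2$-limit of such averages. Pulling back, $g\circ\pi$ is bounded and uniformly continuous on $S^*\tilde X$, and $(g\circ\pi)^+=g^+\circ\pi$ is essentially constant along leaves of $\cW^s$; the analogous statement holds for the backward average on $\cW^u$, and the two agree almost everywhere. Absolute continuity together with the local product structure then forces $g^+\circ\pi$ to be locally essentially constant, and connectedness of $S^*\tilde X$ upgrades this to global essential constancy. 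Projecting back, $g^+$ and hence $F$ is almost everywhere constant on $S^*X$, which is ergodicity.

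The main obstacle will be organizing the orbifold background so that every ingredient of the Anosov--Hopf machinery transfers cleanly to $X$: the orbifold Cartan--Hadamard theorem (so that $\tilde X$ is a genuine manifold), the persistence and $\Gamma$-equivariance of $\cW^s,\cW^u$ through the non-free $\Gamma$-quotient, the absolute continuity and local product structure, and the harmless nature of the singular set, which is closed and of Liouville measure zero. None of these is intrinsically difficult once the setup is in place, but each requires a careful statement in the orbifold category.
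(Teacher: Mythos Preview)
Your approach is correct but follows a genuinely different route from the paper's. You pass to the orbifold universal cover $\tilde X$, which by the orbifold Cartan--Hadamard theorem is a genuine Hadamard manifold; the geodesic flow on $S^*\tilde X$ is then Anosov (uniformly hyperbolic, thanks to the curvature pinching inherited from compactness of $X$), and you run the Hopf argument on $\Gamma$-invariant functions on the non-compact $S^*\tilde X$. The paper instead takes a quotient presentation $X\cong M/K$ with $M$ a \emph{compact} manifold and $K$ a compact Lie group acting with finite isotropy, lifts the geodesic flow to a $K$-invariant flow $\tilde f_t$ on $S^*_KM$, and shows that this lift is only \emph{partially hyperbolic}: locally it is a compact group extension of an Anosov flow, so the Brin--Pesin theory applies, with the center distribution given by the tangent spaces to the $K$-orbits. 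The Hopf argument is then run for $K$-invariant functions on the compact space $S^*_KM$, using the absolute-continuity results of Brin--Pesin and Pugh--Shub for the stable and weak-unstable foliations. Your route has the advantage that the lifted flow is genuinely Anosov, so the dynamical input is the classical one and no center direction needs to be handled; the cost is that you must invoke developability (a nontrivial geometric fact external to the paper's setup) and check that the absolute-continuity and local-product-structure ingredients of the Hopf argument go through on a non-compact space of bounded geometry. The paper's route avoids developability altogether and stays on compact manifolds throughout, at the price of working in the partially hyperbolic framework.
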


As an example of a compact Riemannian orbifold of negative sectional
curvature, one can consider the hyperbolic orbifold
$\mathbb{H}^n/\Gamma$, which is the quotient of the $n$-dimensional
hyperbolic space $\mathbb{H}^n$ by a cocompact discrete group
$\Gamma$ of orientation-preserving isometries of $\mathbb{H}^n$.

Spectral theory of elliptic operators on orbifolds has received much
attention recently (see, for instance, a brief survey in the
introduction of \cite{Dryden-G-G-W}). In \cite{Stanhope-Uribe}, the
Duistermaat-Guillemin trace formula was extended to compact
Riemannian orbifolds. This formula has been applied in
\cite{Guilemin-Uribe-Wang} to an inverse spectral problem on some
orbifolds. In \cite{orbi-Egorov}, we stated two versions of Egorov's
theorem for orbifolds as well as Egorov's theorem for matrix-valued
pseudodifferential operators on orbifolds. One should also mention
the papers \cite{Aurich-Marklof96,Aurich-Steiner01,Marklof96}, where
some properties of quantum systems on hyperbolic orbifolds were
studied.

The paper is organized as follows. Section~\ref{s:prelim} contains
some background information on orbifolds. Section~\ref{s:classical}
is devoted to classical dynamical systems on orbifolds. Here we
prove Theorem~\ref{t:ergodic} on ergodicity of geodesic flows. In
Section~\ref{s:quantum} we prove Theorems~\ref{t:main1} and
\ref{t:main2}. Section~\ref{s:local-Weyl} contains the proof of the
local Weyl law for elliptic operators on orbifolds, which we use in
Section~\ref{s:quantum}.

\section{Preliminaries}\label{s:prelim}
In this section we briefly review some basic notions and results
concerning orbifolds. For more details on orbifold theory we refer
the reader to \cite{Adem-Leida-Ruan}.

\subsection{Orbifolds}
Let $X$ be a Hausdorff topological space. An $n$-dimensional
orbifold chart on $X$ is given by a triple $(\tilde{U},G_U,\phi_U)$,
where $\tilde{U}\subset {\mathbb R}^n$ is a connected open subset,
$G_U$ is a finite group acting on $\tilde{U}$ smoothly and $\phi_U :
\tilde{U} \to X$ is a continuous map, which is $G_U$-equinvariant
($\phi_U\circ g=\phi_U$ for all $g\in G_U$) and induces a
homeomorphism of $\tilde{U}/G_U$ onto an open subset
$U=\phi_U(\tilde{U})\subset X$. An embedding $\lambda :
(\tilde{U},G_U,\phi_U)\to (\tilde{V},G_V,\phi_V)$ between two
orbifold charts is a smooth embedding $\lambda : \tilde{U}\to
\tilde{V}$ with $\phi_V\circ\lambda=\phi_U$.

An orbifold atlas on $X$ is a family ${\mathcal
U}=\{(\tilde{U},G_U,\phi_U)\}$ of orbifold charts, which cover $X$
and are locally compatible: given any two charts
$(\tilde{U},G_U,\phi_U)$ over $\phi_U(\tilde{U})=U\subset X$ and
$(\tilde{U},G_V,\phi_V)$ over $\phi_V(\tilde{V})=V\subset X$, and a
point $x\in U\cap V$, there exists an open neighborhood $W$ of $x$
and a chart $(\tilde{W},G_W,\phi_W)$ over $W$ such that there are
embeddings $\lambda_U:(\tilde{W},G_W,\phi_W)\hookrightarrow
(\tilde{U},G_U,\phi_U)$ and
$\lambda_V:(\tilde{W},G_W,\phi_W)\hookrightarrow
(\tilde{V},G_V,\phi_V)$.

An (effective) orbifold $X$ of dimension $n$ is a paracompact
Hausdorff topological space equipped with an equivalence class of
$n$-dimensional orbifold atlases.

Throughout in the paper, $X$ will denote a compact orbifold.

Let $x\in X$ and let $(\tilde{U},G_U,\phi_U)$ be an orbifold chart
such that $x\in U=\phi_U(\tilde{U})$. Take any $\tilde{x}\in
\tilde{U}$ such that $\phi_U(\tilde{x})=x$. Let
$G_{\tilde{x}}\subset G_U$ be the isotropy group for $\tilde{x}$. Up
to conjugation, this group doesn't depend on the choice of chart and
will be called the local group at $x$. For any $x\in X$, there
exists an orbifold chart $(\tilde{U},G_U,\phi_U)$ such that $x\in
U=\phi_U(\tilde{U})$ and $G_U$ coincides with the local group
$G_{\tilde{x}}$ at $x$. Such an orbifold chart is called a
fundamental orbifold chart in a neighborhood of $x$.

A function $f:X\to {\mathbb C}$ is smooth iff for any orbifold chart
$(\tilde{U},G_U,\phi_U)$ the composition $f\left|_U\right.\circ
\phi_U$ is a smooth function on $\tilde{U}$. Denote by $C^\infty(X)$
the space of smooth functions on $X$.

\subsection{The cotangent bundle}
The cotangent bundle $T^*X$ of $X$ is an orbifold whose atlas is
constructed as follows. Let $(\tilde{U},G_U,\phi_U)$ is an orbifold
chart over $U\subset X$. Consider the local cotangent bundle
$T^*\tilde{U}=\tilde{U}\times {\mathbb R}^n$. It is equipped with a
natural action of the group $G_U$. The projection map
$T^*\tilde{U}\to \tilde{U}$ is $G_U$-equivariant, so we obtain a map
$p_U:T^*U:=T^*\tilde{U}/G_U\to U$, whose fiber $p_U^{-1}(x)$ is
homeomorphic to ${\mathbb R}^n/G_{\tilde{x}}$. $T^*X$ is obtained by
gluing together the bundles $p_U: T^*U\to U$ defined for each chart
$U$ in the atlas of $X$. Namely, let $(\tilde{U},G_U,\phi_U)$ and
$(\tilde{V},G_V,\phi_V)$ be two orbifold charts over
$\phi_U(\tilde{U})=U\subset X$ and $\phi_V(\tilde{V})=V\subset X$
respectively, and let $x$ belong to $U\cap V$. By definition, there
exist an open neighborhood $W$ of $x$ and a chart
$(\tilde{W},G_W,\phi_W)$ over $W$ such that there are embeddings
$\lambda_U:(\tilde{W},G_W,\phi_W)\hookrightarrow
(\tilde{U},G_U,\phi_U)$ and
$\lambda_V:(\tilde{W},G_W,\phi_W)\hookrightarrow
(\tilde{V},G_V,\phi_V)$. These embeddings give rise to
diffeomorphisms $\lambda_U: \tilde{W}\to \lambda_U(\tilde{W})\subset
\tilde{U}$ and $\lambda_V: \tilde{W}\to \lambda_V(\tilde{W})\subset
\tilde{V}$, which provide an equivariant diffeomorphism
$\lambda_{UV}=\lambda_V\lambda_U^{-1} : \lambda_U(\tilde{W})\to
\lambda_V(\tilde{W})$, the transition function. There are induced
equivariant embeddings of cotangent bundles $T^*\lambda_U:
T^*\tilde{W}\to T^*\tilde{U}$ è $T^*\lambda_V : T^*\tilde{W}\to
T^*\tilde{V}$. The local bundles $p_U:T^*\tilde{U}/G_U\to U$ è
$p_V:T^*\tilde{V}/G_V\to V$ are glued together by the transition
functions $T^*\lambda_{VU} =T^*\lambda_V(T^*\lambda_U)^{-1} :
T^*\lambda_U(T^*\tilde{W})\to T^*\lambda_V(T^*\tilde{W})$. Each
orbifold chart $(\tilde{U},G_U,\phi_U)$ on $X$ give rise to an
orbifold chart $(T^*\tilde{U},G_U,T^*\phi_U)$ on $T^*X$, where the
map $T^*\phi_U$ is induced by the projection $T^*\tilde{U}\to T^*U$.

Like in the manifold case, the cotangent bundle $T^*X$ carries a
canonical symplectic structure. Here by a symplectic form on an
orbifold $Y$ we mean an orbifold atlas ${\mathcal
U}=\{(\tilde{U},G_U,\phi_U)\}$ together with a $G_U$-invariant
symplectic form $\omega_U$ on $\tilde{U}$ for each
$(\tilde{U},G_U,\phi_U)\in {\mathcal U}$ such that, for any
transition function $\lambda_{UV} : \lambda_U(\tilde{W})\subset
\tilde{U}\to \lambda_V(\tilde{W})\subset\tilde{V}$, we have
$\lambda_{UV}^*\omega_V=\omega_U$. An orbifold $Y$ equipped with a
symplectic form $\omega$ is called a symplectic orbifold. The
canonical symplectic structure on $T^*X$ can be constructed as
follows. Consider the orbifold chart $(T^*\tilde{U},G_U,T^*\phi_U)$
induced by an orbifold chart $(\tilde{U},G_U,\phi_U)$.
$T^*\tilde{U}$ carries a canonical symplectic form
$\omega_{T^*\tilde{U}}$, which is invariant with respect to the
$G_U$-action on $T^*\tilde{U}$. These symplectic forms are
compatible for two different orbifold charts and define a symplectic
form on $T^*X$.

\subsection{Quotient presentations}
We will need the following well-known fact from orbifold theory due
to Kawasaki \cite{Kawasaki78,Kawasaki81} (see, for instance,
\cite{Bucicovschi,Moerdijk-Mrcun} for a detailed proof).

\begin{proposition}\label{p:quotient}
Let $M$ be a smooth manifold and $K$ a compact Lie group acting on
$M$ with finite isotropy groups. Then the quotient $X = M/K$ (with
the quotient topology) has a natural orbifold structure. Conversely,
any orbifold is a quotient of this type.
\end{proposition}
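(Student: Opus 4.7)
The plan is to prove the two halves of Proposition \ref{p:quotient} separately: the forward direction via the slice theorem for proper Lie group actions, and the converse via the orthonormal frame bundle construction.

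For the forward direction, I would exploit that a compact group action is automatically proper. Fix $x \in M$ with finite isotropy $K_x$, and apply the Koszul--Palais slice theorem to obtain a $K_x$-invariant smooth submanifold $S \subset M$ through $x$, transverse to the orbit, such that the map $K \times_{K_x} S \to M$ is a $K$-equivariant diffeomorphism onto a $K$-invariant open neighborhood of $K \cdot x$. Since $K_x$ is finite, $\dim S = \dim M - \dim K =: n$. The restricted projection $\pi|_S : S \to M/K$ descends to a homeomorphism $S/K_x \to \pi(S)$ onto an open neighborhood of $[x]$ in $M/K$. This directly supplies an $n$-dimensional orbifold chart $(S, K_x, \pi|_S)$ around $[x]$, with the local group at $[x]$ equal to $K_x$.

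To upgrade this collection of slice charts into a compatible atlas, I would verify the local compatibility axiom directly. Given two slice charts $(S_1, K_{x_1}, \phi_1)$ and $(S_2, K_{x_2}, \phi_2)$ whose images meet at a point $[y]$, pick representatives $y_i \in S_i$ with $\pi(y_i) = [y]$ and an element $k \in K$ with $k \cdot y_1 = y_2$. Construct a further slice $S_W$ at $y_1$, shrunk small enough to lie inside $S_1$; the inclusion $S_W \hookrightarrow S_1$ is then an equivariant embedding with respect to $K_{y_1}$, and left translation by $k$ produces an equivariant embedding $S_W \to k \cdot S_W \subset S_2$, where $K_{y_1}$ is identified with $K_{y_2} = k K_{y_1} k^{-1}$ via conjugation by $k$. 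Both embeddings cover the identity on a neighborhood of $[y]$, which is precisely what the atlas axiom requires.

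For the converse, given an orbifold $X$, equip it with a Riemannian metric using a $G_U$-invariant partition of unity, and construct the orthonormal frame bundle $FX$ as follows. On each chart $(\tilde U, G_U, \phi_U)$, form the usual manifold frame bundle $F\tilde U$, which carries a free right $O(n)$-action and a lifted left $G_U$-action. Effectiveness of the orbifold structure implies the representation of $G_{\tilde x}$ on $T_{\tilde x} \tilde U$ is faithful, hence no nontrivial $g \in G_U$ fixes a frame, so the $G_U$-action on $F\tilde U$ is free. Thus $F\tilde U / G_U$ is a smooth manifold of dimension $n + \dim O(n)$, and the orbifold transition functions lift to equivariant diffeomorphisms gluing the pieces into a global smooth manifold $FX$ on which $O(n)$ acts with finite isotropy groups (conjugate to the local groups $G_{\tilde x}$). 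The identification $FX / O(n) \cong X$ as topological spaces upgrades to an isomorphism of orbifold structures once one matches the slice charts of the $O(n)$-action with the charts coming from the orbifold atlas.

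The main obstacle will be the gluing bookkeeping at the two ends: in the forward direction, producing equivariant embeddings (rather than merely matching quotients) between slice charts at points with different isotropy groups, which forces the translation-by-$k$ trick and careful shrinking; and in the converse, checking that the local quotients $F\tilde U / G_U$ glue into a Hausdorff smooth manifold $FX$ with an $O(n)$-action whose slice charts recover precisely the original orbifold atlas on $X$. The effectiveness hypothesis is essential in the converse to guarantee freeness of the $G_U$-action on frames, and without it one would be forced into the more delicate groupoid framework.
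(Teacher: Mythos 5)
The paper does not give its own proof of Proposition~\ref{p:quotient}; it cites Kawasaki and the detailed expositions in \cite{Bucicovschi,Moerdijk-Mrcun}, and immediately afterwards sketches Satake's orthonormal frame bundle construction as the canonical quotient presentation. Your proposal---slice theorem for the forward direction, frame bundle for the converse---is exactly the standard argument in those references and is the one the paper is implicitly invoking, so there is no divergence in approach.

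A few remarks to tighten the sketch. In the forward direction, when you pass from the slice chart $(S,K_x,\pi|_S)$ at one point to the compatibility check at a nearby $[y]$, you must shrink $S_W$ so that $K_{y_1}\subset K_{x_1}$ and $k K_{y_1} k^{-1}\subset K_{x_2}$ act on $S_W$ and $kS_W$ by the restriction of the ambient actions; this is automatic from the slice theorem but worth saying, since the orbifold atlas axiom demands embeddings of \emph{charts}, i.e.\ equivariance with respect to injective homomorphisms of the local groups, not just equivariant maps of spaces. In the converse direction, freeness of the $G_U$-action on $F\tilde U$ follows from effectiveness via Bochner linearization: a finite-order diffeomorphism fixing a point with trivial differential there is the identity near that point, hence (by connectedness of $\tilde U$ and effectiveness) the identity element. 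It is also worth recording explicitly that the glued space $FX$ is Hausdorff---this follows because $X$ is Hausdorff and the local projections $F\tilde U/G_U\to\tilde U/G_U$ are proper with compact fibers---and that the resulting $O(n)$-isotropy at a frame over $x$ is conjugate to the local group $G_x$, which is precisely what matches the slice-theorem charts of the $O(n)$-action with the original orbifold atlas, closing the loop. With these points made precise the argument is complete.
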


Any representation of an orbifold $X$ as the quotient $X \cong M/K$
of an action of a compact Lie group $K$ on a smooth manifold $M$
with finite isotropy groups will be called a quotient presentation
for $X$. There is a classical example of a quotient presentation for
an orbifold $X$ due to Satake. Choose a Riemannian metric on $X$. It
can be shown that the orthonormal frame bundle $M=F(X)$ of the
Riemannian orbifold $X$ is a smooth manifold, the group $K=O(n)$
acts smoothly, effectively and locally freely on $M$, and $M/K\cong
X$.

A quotient presentation $X \cong M/K$ for the orbifold $X$ gives
rise to a quotient presentation for the cotangent bundle $T^*X$ of
$X$ in the following way. The action of $K$ on $M$ induces an action
of $K$ on the cotangent bundle $T^*M$. Denote by $\mathfrak k$ the
Lie algebra of $K$. For any $v\in \mathfrak k$, denote by $v_M$ the
corresponding infinitesimal generator of the $K$-action on $M$. For
any $x\in M$, vectors of the form $v_M(x)$ with $v\in\mathfrak k$
span the tangent space $T_x(Kx)$ to the $K$-orbit of $x$. Denote
\[
(T^*_KM)_x  =\{\xi\in T^*_xM :\langle \xi, v_M(x)\rangle =0 \
\text{for any}\ v\in \mathfrak k\}.
\]
Since the action is locally free, the disjoint union
\[
T^*_KM=\bigsqcup_{x\in M}(T^*_KM)_x
\]
is a subbundle of the cotangent bundle $T^*M$, called the conormal
bundle. The bundle $T^*_KM$ is a $K$-invariant submanifold of $T^*M$
such that
\[
T^*_KM/K \cong T^*X,
\]
thus giving a quotient presentation for $T^*X$. This construction is
a particular case of the Marsden-Weinstein symplectic reduction (see
\cite{orbi-Egorov} for details).

\subsection{Pseudodifferential operators}
Here we recall some basic facts about pseudodifferential operators
on orbifolds (see \cite{Bucicovschi,Girbau-Nicolau1,Girbau-Nicolau2}
for details). As above, let $X$ be a compact orbifold.

A linear mapping $P : C^\infty(X) \to C^\infty(X)$ is a (pseudo)
differential operator on $X$ of order $m$ iff:

(1) the Schwartz kernel of $P$ is smooth outside of any neighborhood
of the diagonal in $X\times X$.

(2) for any $x \in X$ and for any orbifold chart $(\tilde{U}, G_U,
\phi_U)$ with $x \in U$, the operator $C^\infty_c (U)\ni f \mapsto
P(f)\left|_U\right.\in C^\infty(U)$ is given by the restriction to
$G_U$-invariant functions on $\tilde{U}$ of a (pseudo)differential
operator $\tilde{P}$ of order $m$ on $\tilde{U}$ that commutes with
the $G_U$ action.

All our pseudodifferential operators are assumed to be classical (or
polyhomogeneous), that is, their complete symbols can be represented
as an asymptotic sum of homogeneous functions. Denote by $\Psi^m(X)$
the class of pseudodifferential operators on $X$ of order $m$.

It is not hard to show \cite[Proposition 3.3]{Bucicovschi} that the
operator $\tilde{P}$ introduced in the part (2) of the definition is
unique up to a smoothing operator. In particular, it is unique if
$P$ is a differential operator. A pseudodifferential operator
$\tilde{P}$ on $\tilde{U}$ that commutes with the $G_U$-action has a
principal symbol $\tilde{p}\in C^\infty(T^*\tilde{U} \setminus
\{0\})$ that is invariant with respect to the $G_U$-action on
$T^*\tilde{U}$, and therefore it induces a function on the quotient
$T^*\tilde{U} \setminus \{0\}/G_U=T^*U \setminus \{0\}$. One can
check that these functions define a function on $T^*X \setminus
\{0\}$, the principal symbol of $P$. The pseudodifferential operator
$P$ on $X$ is elliptic if $\tilde{P}$ is elliptic for all choices of
orbifold charts.

\section{Classical dynamics on orbifolds}\label{s:classical}
\subsection{Hamiltonian dynamics}
The flow $F_t$ on a symplectic orbifold $(Y,\omega)$ is Hamiltonian
with a Hamiltonian $H\in C^\infty(Y)$ if, in any orbifold chart
$(\tilde{U},G_U,\phi_U)$, the infinitesimal generator $X_H$ of the
flow satisfies the standard relation
\[
i(X_H)\omega_U=d(H\left|_U\right.\circ \phi_U).
\]
Using quotient presentations, one can show the existence and
uniqueness of the Hamiltonian flow on a compact orbifold with an
arbitrary Hamiltonian $H$ (cf. \cite{Sjamaar-Lerman}). More
precisely, we have the following statement.

\begin{thm}\label{t:flow}
Let $v$ be a smooth vector field on a compact orbifold $Y$. Then
there exists a one-parameter group $\{\phi_t\}$ of diffeomorphisms
of $Y$ generated by $v$: for any $f\in C^\infty(Y)$
\[
\frac{d}{dt}f(\phi_t(x))\left|_{t=0}\right.=v(f)(x), \quad x\in Y.
\]
\end{thm}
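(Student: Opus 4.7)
The plan is to reduce to the manifold case via a quotient presentation of $Y$. By Proposition~\ref{p:quotient}, fix $Y = M/K$ with $M$ a compact smooth manifold and $K$ a compact Lie group acting on $M$ with finite isotropy groups (for instance, $M = F(Y)$, $K = O(n)$, as recalled above).

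The main step is to lift $v$ to a $K$-invariant smooth vector field $\tilde v$ on $M$. I would first fix a $K$-invariant Riemannian metric on $M$ (by averaging an arbitrary one), producing a $K$-invariant splitting $TM = \cV \oplus \cH$ where $\cV$ is the vertical distribution tangent to the $K$-orbits. By the slice theorem for locally free $K$-actions, each orbit admits a tubular neighborhood of the form $K \times_{G_x} \tilde U$, where $G_x$ is the finite isotropy at $x$ and $\tilde U$ is a slice that can be identified with a fundamental orbifold chart around $\pi(x) \in Y$. The $G_x$-invariant local representative of $v$ on $\tilde U$ extends uniquely to a $K$-invariant horizontal vector field on the tubular neighborhood; local compatibility of the orbifold atlas of $Y$ then forces these local lifts to agree on overlaps, producing a globally defined smooth $K$-invariant vector field $\tilde v$ on $M$.

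With $\tilde v$ in hand, compactness of $M$ gives a one-parameter group $\{\tilde\phi_t\}$ of diffeomorphisms of $M$ generated by $\tilde v$. The $K$-invariance $g_* \tilde v = \tilde v$ for every $g \in K$ forces $g \circ \tilde\phi_t = \tilde\phi_t \circ g$, so $\{\tilde\phi_t\}$ descends to a one-parameter group $\{\phi_t\}$ of diffeomorphisms of $Y$. For $f \in C^\infty(Y)$, pulling back to $\tilde f := \pi^* f \in C^\infty(M)^K$ and applying the chain rule yields $\frac{d}{dt} f(\phi_t(x))|_{t=0} = \tilde v(\tilde f)(\tilde x) = v(f)(x)$ for any lift $\tilde x$ of $x$. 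Uniqueness of $\{\phi_t\}$ reduces to ODE uniqueness on $\tilde U$ in each fundamental orbifold chart: any other one-parameter group generated by $v$ must lift locally to the same manifold-level ODE solution as $\phi_t$, hence agree with it near every point of $Y$, and therefore everywhere via the group property.

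The main technical obstacle is Step~1: showing that the orbifold-chart data for $v$ assembles into a single smooth, globally defined, $K$-invariant vector field on the cover $M$. Once this correspondence between smooth vector fields on the orbifold $Y$ and $K$-invariant (horizontal) vector fields on $M$ is established, the theorem becomes a direct transfer of classical ODE theory on manifolds, in the spirit of \cite{Sjamaar-Lerman}.
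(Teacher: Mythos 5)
Your proposal follows essentially the same route as the paper: fix a quotient presentation $Y\cong M/K$, choose a $K$-invariant Riemannian metric to obtain a $K$-invariant horizontal distribution, lift $v$ horizontally to a $K$-invariant vector field $\tilde v$ on $M$, integrate $\tilde v$ on the compact manifold $M$, and descend the resulting flow to $Y$. The extra detail you supply via the slice theorem (checking that the locally defined horizontal lifts patch) and the uniqueness discussion are fine but go beyond what the paper records; the paper simply observes that the horizontal lift is well-defined pointwise by the requirement $d\pi_x(\tilde v(x))=v(\pi(x))$ with $\tilde v(x)\in\mathcal H_x$, and leaves smoothness and the final flow identity as easy checks.
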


\begin{proof}
Let $Y \cong N/K$ be a quotient presentation for the orbifold $Y$
and $\pi :N\to Y$ the corresponding projection map. Let $\mathcal H$
be a $K$-invariant distribution on $N$ such that ${\mathcal
H}_x\oplus T_x(Kx)=T_xN$ for any $x\in N$ (a horizontal
distribution). For instance, we can choose a $K$-invariant
Riemannian metric on $N$ and take $\mathcal H_x$ to be the
orthogonal complement of $T_x(Kx)$ in $T_xN$, $x\in N$. For any
$x\in N$, there exists a unique vector $\tilde{v}(x)\in {\mathcal
H}_x$ such that $d\pi_x(\tilde{v}(x))=v(\pi(x))$. Let
$\tilde{\phi}_t$ be the one-parameter group of diffeomorphisms of
$N$ generated by $\tilde{v}$. Each $\tilde{\phi}_t$ is a
$K$-invariant diffeomorphism of $N$ and therefore induces a
diffeomorphism $\phi_t$ of $Y$. It is easy to see that $\{\phi_t\}$
is the one-parameter group of diffeomorphisms of $Y$ generated by
$v$.
\end{proof}

The existence of the Hamiltonian flow $f_t$ on the cotangent bundle
$T^*X$ of a compact orbifold $X$ with Hamiltonian $p\in S^1(T^*X)$
follows immediately from Theorem~\ref{t:flow} applied to the
corresponding Hamiltonian vector field $v$ on an arbitrary level set
$\{(x,\xi)\in T^*X : p(x,\xi)=E\}, E>0$. In this case, one can give
another construction of the corresponding Hamiltonian flow $f_t$
based on symplectic reduction. Let $X\cong M/K$ be a quotient
presentation for $X$. Consider a Hamiltonian $p\in C^\infty(T^*X)$
as a smooth $K$-invariant function on $T^*_KM$. Let $\tilde{p}\in
C^\infty(T^*M)^K$ be an extension of $p$ to a smooth $K$-invariant
function on $T^*M$. Let $\tilde{f}_t$ be the Hamiltonian flow of
$\tilde{p}$ on $T^*M$. Since $\tilde{p}$ is $K$-invariant, the flow
$\tilde{f}_t$ preserves the conormal bundle $T^*_KM$, and its
restriction to $T^*_KM$ (denoted also by $\tilde{f}_t$) commutes
with the $K$-action on $T^*_KM$. So the flow $\tilde{f}_t$ on
$T^*_KM$ induces a flow $f_t$ on the quotient $T^*_KM/K=T^*X$, which
is called the reduced flow. One can show that this flow is a
Hamiltonian flow on $T^*X$ with Hamiltonian $p$. We refer the reader
to \cite{Sjamaar-Lerman} for more information on Hamiltonian
dynamics on singular symplectic spaces.

\subsection{Proof of Theorem~\ref{t:ergodic}}
Let $X$ be a compact Riemannian orbifold of negative sectional
curvature and $f_t : S^*X\to S^*X$ the geodesic flow of $X$. Let
$X\cong M/K$ be a quotient presentation for $X$. It gives rise to
the quotient representation for $S^*X$:
\[
S^*X\cong S^*_KM/K,
\]
where $S^*_KM=\{(x,\xi)\in T^*_KM: |\xi|=1\}$. Let ${\mathcal H}$ be
a horizontal distribution on $S^*_KM$ and let $\tilde{f}_t$ be a
$K$-invariant flow on $S^*_KM$, which is the horizontal lift of the
flow $f_t$ to $S^*_KM$ as in the proof of Theorem~\ref{t:flow}.

\begin{lem}\label{l:ph}
The flow $\tilde{f}_t$ on $S^*_KM$ is a partially hyperbolic flow.
\end{lem}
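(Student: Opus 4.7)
The plan is to exhibit a continuous $d\tilde f_t$-invariant splitting
\[
T(S^*_KM) = E^s \oplus E^c \oplus E^u
\]
with uniform exponential contraction on $E^s$, uniform exponential expansion on $E^u$, and at most bounded growth on the center $E^c$, which will contain both the flow direction and the vertical subbundle $V$ tangent to the $K$-orbits.

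First I would decompose $T(S^*_KM) = \mathcal H \oplus V$ and analyze $d\tilde f_t$ in this decomposition. Since $\tilde f_t$ is $K$-equivariant it maps $K$-orbits to $K$-orbits, so $V$ is $d\tilde f_t$-invariant; moreover, a vertical vector at $x$ has the form $\tfrac{d}{ds}|_{s=0}e^{sY}\cdot x$ for some $Y\in\mathfrak k$ and is sent by $d\tilde f_t$ to the analogous vertical vector at $\tilde f_t(x)$, so compactness of $S^*_KM$ together with continuity of the infinitesimal orbit map $Y\mapsto\tfrac{d}{ds}|_{s=0}e^{sY}\cdot x$ force $\|d\tilde f_t|_V\|$ to be uniformly bounded in $t$. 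On $\mathcal H \cong \pi^*T(S^*X)$ (via $d\pi$), the derivative $d\tilde f_t$ is conjugate to $df_t$ on $T(S^*X)$. Hence in $\mathcal H\oplus V$ the operator $d\tilde f_t$ is block lower-triangular, with horizontal diagonal block $df_t$ and bounded vertical diagonal block.

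Second, I would apply Anosov's classical theorem \cite{Anosov67} locally: in each fundamental orbifold chart of $X$ the metric lifts to a negatively curved Riemannian metric on a smooth manifold, whose geodesic flow admits a hyperbolic splitting with rates controlled by the curvature bounds. The characterization of the Anosov splitting at each point by exponential contraction and expansion forces $G_U$-equivariance, so the local pieces glue into a continuous $df_t$-invariant splitting $T(S^*X)=\tilde E^s\oplus\langle X_H\rangle\oplus\tilde E^u$ on the orbifold with uniform rates by compactness of $S^*X$. Pulling back along $d\pi$ to $\mathcal H$ gives a continuous, approximately invariant splitting
\[
T(S^*_KM) = \hat E^s \oplus \langle\tilde X\rangle \oplus \hat E^u \oplus V,
\]
with hyperbolic rates on $\hat E^{s,u}$ (up to the triangular corrections into $V$) and only bounded growth on $\langle\tilde X\rangle\oplus V$.

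Finally I would upgrade this approximate splitting to a genuinely $d\tilde f_t$-invariant one by the standard dominated-splitting / graph-transform machinery of Hirsch--Pugh--Shub: given a continuous approximately invariant splitting that is strictly dominated, there exist unique $d\tilde f_t$-invariant subbundles $E^s, E^u$ close to $\hat E^s, \hat E^u$, together with $E^c=\langle\tilde X\rangle\oplus V$, inheriting the exponential rates on $E^{s,u}$ and the bounded growth on $E^c$, which is precisely partial hyperbolicity. The main obstacle is verifying the strict domination, i.e.\ that the uniform Anosov rates on $\hat E^{s,u}$ (bounded away from zero by the negative curvature) strictly dominate the bounded central growth coming from $K$-equivariance and compactness of $M$; once this quantitative comparison is made explicit, the lemma follows from standard hyperbolic theory.
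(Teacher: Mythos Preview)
Your approach is correct but proceeds differently from the paper. The paper uses the slice theorem (Proposition~\ref{p:slice}) to identify $p_{S^*X}^{-1}(S^*U)\cong K\times_{G_U}S^*\tilde U$ over a fundamental chart, lifts $\tilde f_t$ to the finite cover $K\times S^*\tilde U$, observes that this lift is an isometric $K$-extension $F_t(k,(x,\xi))=(k\phi_t(x,\xi),f_t(x,\xi))$ of the Anosov geodesic flow on $S^*\tilde U$, and then simply cites \cite[Theorem~2.2]{Brin-Pesin74} to conclude partial hyperbolicity of compact group extensions of Anosov systems. You instead work globally on $S^*_KM$ and essentially \emph{reprove} the Brin--Pesin result in this setting: you exploit the block lower-triangular form of $d\tilde f_t$ in $\mathcal H\oplus V$, use $K$-equivariance and compactness to get uniformly bounded growth on the invariant center $E^c=\langle\tilde X\rangle\oplus V$, pull back the Anosov splitting along $d\pi$, and run the Hirsch--Pugh--Shub graph transform to produce the genuine invariant $E^s,E^u$. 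Your route is more self-contained; the paper's is shorter precisely because it outsources this step to \cite{Brin-Pesin74}.

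One point to tighten: invoking ``Anosov's classical theorem \cite{Anosov67} locally'' is not quite right, since Anosov's theorem is for flows on \emph{compact} manifolds and the local geodesic flow on the open chart $S^*\tilde U$ is incomplete. What you actually need (and what your phrase ``rates controlled by the curvature bounds'' points to) is the direct Jacobi-field construction of the stable and unstable subspaces along each complete geodesic of $X$, which gives uniform exponential rates from the pinched negative curvature; the resulting splitting is then $G_U$-equivariant by uniqueness and glues as you describe. Relatedly, speaking of an Anosov splitting of $T(S^*X)$ needs care since $S^*X$ is itself an orbifold, but since you immediately pull everything back to the smooth manifold $S^*_KM$ this causes no real difficulty.
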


We recall that the flow $\tilde{f}_t$ is a partially hyperbolic if
the diffeomorphism $\tilde{f}_1$ is partially hyperbolic, which
means that there are distributions $E^s$, $E^c$ and $E^u$ on
$S^*_KM$, which are invariant under the map $d\tilde{f}_1$:
\[
d\tilde{f}_{1,(x,\xi)}(E^\tau(x,\xi))=E^\tau(\tilde{f}_1(x,\xi)),
\quad \tau = s,c,u,
\]
such that, for any $(x,\xi)\in S^*_KM$
\[
T_{(x,\xi)}(S^*_KM)=E^s(x,\xi)\oplus E^c(x,\xi)\oplus E^u(x,\xi),
\]
and there exist $C>0$ and
\[
0<\lambda_1\leq \mu_1<\lambda_2\leq \mu_2 < \lambda_3\leq \mu_3,
\quad \mu_1<1, \lambda_3>1,
\]
such that for $n>0$ we have
\[
C^{-1}\lambda_1^n\|v\|\leq \|d\tilde{f}_{n,(x,\xi)}v\|\leq
C\mu_1^n\|v\|, \quad v\in E^s(x,\xi),
\]
\[
C^{-1}\lambda_2^n\|v\|\leq \|d\tilde{f}_{n,(x,\xi)}v\|\leq
C\mu_2^n\|v\|, \quad v\in E^c(x,\xi),
\]
\[
C^{-1}\lambda_3^n\|v\|\leq \|d\tilde{f}_{n,(x,\xi)}v\|\leq
C\mu_3^n\|v\|, \quad v\in E^u(x,\xi).
\]

\begin{proof}[Proof of Lemma~\ref{l:ph}]
Let $(\tilde{U},G_U,\phi_U)$ be an orbifold chart over $U\subset X$
and $(T^*\tilde{U},G_U,T^*\phi_U)$ the induced orbifold chart over
$T^*U\subset T^*X$. The lift of the flow $f_t$ to $S^*\tilde U$ is
the geodesic flow of the metric $g_{\tilde U}$ on $\tilde U$
(denoted also by $f_t$). Since $g_{\tilde U}$ has negative sectional
curvature, $f_t$ is an Anosov flow, that is, there are distributions
$E^s, E^u\subset T(S^*\tilde U)$ such that for any $(x,\xi)\in
S^*\tilde U$
\[
T_{(x,\xi)}(S^*\tilde U)=E^s(x,\xi)\oplus E^c(x,\xi)\oplus
E^u(x,\xi),
\]
where $E^c$ is the tangent bundle to the orbits of the flow $f_t$.
Moreover, there are constants $C>0$ and $\lambda$, $0<\lambda<1$,
such that for $t>0$ we have
\[
\|df_{t,(x,\xi)}v\|\leq C\lambda^t\|v\|, \quad v\in E^s(x,\xi),
\]
\[
\|df_{-t,(x,\xi)}v\|\leq C\lambda^t\|v\|, \quad v\in E^s(x,\xi).
\]
It follows from the definition that the distributions $E^s$ and
$E^u$ are continuous and invariant under the map $df_t$.

Constructing an appropriate slice for the $K$-action on $M$, one can
give the following local description of the projection map $p:M\to
X$ (see, for instance, \cite[Proposition 2.1]{Bucicovschi} for
details).

\begin{proposition}\label{p:slice}
For any $x\in X$, there exists an orbifold chart
$(\tilde{U},G_U,\phi_U)$ defined in a neighborhood $U\subset X$ of
$x$ such that there exists a $K$-equivariant diffeomorphism
\[
p^{-1}(U)\cong K\times_{G_U}\tilde{U}.
\]
\end{proposition}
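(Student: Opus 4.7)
The plan is to invoke the classical slice theorem for compact Lie group actions (the Koszul/Mostow--Palais slice theorem) and check that the resulting local product description is exactly an orbifold chart of the type described. Pick a preimage $\tilde{x}\in p^{-1}(x)\subset M$. The hypothesis that $K$ acts with finite isotropy ensures that the isotropy group $K_{\tilde{x}}$ is finite; set $G_U:=K_{\tilde{x}}$.

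Next I would choose a $K$-invariant Riemannian metric on $M$ (which exists because $K$ is compact) and use the exponential map to produce a slice. Concretely, let $N_{\tilde{x}}$ be the orthogonal complement of $T_{\tilde{x}}(K\tilde{x})$ inside $T_{\tilde{x}}M$; because $K_{\tilde{x}}$ is finite it acts on $N_{\tilde{x}}$, and $\exp_{\tilde{x}}$ restricted to a small $K_{\tilde{x}}$-invariant ball $B\subset N_{\tilde{x}}$ gives a $K_{\tilde{x}}$-equivariant diffeomorphism onto a slice $\tilde{U}\subset M$ at $\tilde{x}$. The standard slice theorem then asserts that the natural $K$-equivariant map
\[
K\times_{G_U}\tilde{U}\longrightarrow K\cdot\tilde{U},\qquad [k,\tilde{y}]\mapsto k\cdot\tilde{y},
\]
is a diffeomorphism onto a $K$-saturated open neighborhood of the orbit $K\tilde{x}$.

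Setting $U:=p(\tilde{U})$ and $\phi_U:=p|_{\tilde{U}}$, I would verify that $(\tilde{U},G_U,\phi_U)$ is an orbifold chart for $x$. Since the above slice diffeomorphism intertwines the $K$-action with translation on the first factor, passing to $K$-orbits yields
\[
p^{-1}(U)/K \;=\; (K\times_{G_U}\tilde{U})/K \;\cong\; \tilde{U}/G_U,
\]
so $\phi_U$ induces a homeomorphism $\tilde{U}/G_U\cong U$. Moreover, $p^{-1}(U)=K\cdot\tilde{U}\cong K\times_{G_U}\tilde{U}$ is precisely the $K$-equivariant diffeomorphism claimed. A small check that the chart $(\tilde{U},G_U,\phi_U)$ is compatible with the orbifold structure defined by the quotient presentation $X\cong M/K$ (namely, that the orbifold structure from Proposition~\ref{p:quotient} is represented, near $x$, by slice charts of this form) finishes the proof.

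The only real obstacle is the last compatibility check: one must ensure the orbifold atlas coming from Kawasaki's construction in Proposition~\ref{p:quotient} agrees, up to equivalence, with the slice charts produced here. This is essentially built into the proof of Proposition~\ref{p:quotient}, so the argument ultimately reduces to the slice theorem plus bookkeeping; the geometric content is entirely contained in the existence of the linear slice for the finite stabilizer $G_U=K_{\tilde{x}}$.
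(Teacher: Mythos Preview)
Your proposal is correct and is precisely the approach the paper has in mind: the paper does not give a detailed proof of this proposition but simply states that it follows by ``constructing an appropriate slice for the $K$-action on $M$'' and refers to \cite[Proposition~2.1]{Bucicovschi} for details. Your argument via the Koszul--Mostow--Palais slice theorem with a $K$-invariant metric and the normal exponential map is exactly this slice construction spelled out.
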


Recall that, by definition, $K\times_{G_U}\tilde{U}=(K\times
\tilde{U})/G_U$, where $G_U$ acts on $K\times \tilde{U}$ by
\[
\gamma\cdot (k,y)=(k\gamma^{-1}, \gamma y),\quad k\in K, y\in
\tilde{U}, \gamma \in G_U,
\]
and the $K$-action on $K\times_{G_U}\tilde{U}$ is given by the left
translations on $K$.

We have the corresponding local description for the projection map
$p_{S^*X} : S^*_KM\to S^*X$ associated with the quotient
presentation of $S^*X$:
\[
p_{S^*X}^{-1}(S^*U)\cong K\times_{G_U}S^*\tilde{U}.
\]

Using the finite covering $K\times S^*\tilde{U}\to
K\times_{G_U}S^*\tilde{U}$, one can lift the flow ${\tilde f}_t$ to
a flow $F_t$ on $K\times S^*\tilde{U}$. We have a commutative
diagram
\[
  \begin{CD}
K\times S^*\tilde{U} @>F_t>> K\times S^*\tilde{U}\\
@VVV                   @VVV        \\
S^*\tilde{U} @>f_t>> S^*\tilde{U}
  \end{CD}
\]
Since the flow $F_t$ is $K$-invariant, it is a group extension over
$f_t$:
\[
F_t(k,(x,\xi))=(k\phi_t(x,\xi),f_t(x,\xi)), \quad k\in K, (x,\xi)\in
S^*\tilde{U},
\]
where $\phi_t : S^*\tilde{U}\to K$ is a smooth function on
$S^*\tilde{U}$ with values in $K$.

By \cite[Theorem 2.2]{Brin-Pesin74}, the flow $F_t$ on $K\times
S^*\tilde{U}$ is a partially hyperbolic flow, and, therefore, the
flow $\tilde{f}_t$ is partially hyperbolic as well.
\end{proof}

The flow $\tilde{f}_t$ has a smooth invariant measure. Locally, with
respect to an equivariant trivialization $p_{S^*X}^{-1}(S^*U)\cong
K\times_{G_U}S^*\tilde{U}$ over an orbifold chart
$(S^*\tilde{U},G_U,T^*\phi_U)$ defined in a open set $U$, this
measure is the product $dk\times \mu$, where $dk$ is a Haar measure
on $K$ and $\mu$ is the Liouville measure on $S^*\tilde{U}$.

For any $(x,\xi)\in S^*_KM$, the space $E^c(x,\xi)$ coincides with
the tangent space of the $K$-orbit of $(x,\xi)$. As shown by Brin,
the distributions $E^s$ and $E^u$ are H\"older. Moreover, the
distributions $E^s$ and $E^u$ are integrable, and their integral
manifolds form invariant continuous foliations $W^s$ and $W^u$ on
$S^*_KM$. By \cite{Brin-Pesin74} and \cite{Pugh-Shub72}, the
foliations $W^s$ and $W^u$ are transversely absolutely continuous
with bounded Jacobian.

To prove ergodicity of the flow $f_t$ on $S^*X$, we apply the
classical Hopf argument to $K$-invariant functions on $S^*_KM$. We
will follow a detailed exposition of the proof of the ergodicity of
the geodesic flow given in \cite{Brin}. Let $\phi$ be a
$f_t$-invariant measurable function on $S^*X$. Then $\phi\circ
p_{S^*X}$ is a $\tilde{f}_t$-invariant and $K$-invariant continuous
function on $S^*_KM$. By \cite[Proposition 2.6]{Brin}, $\phi \circ
p_{S^*X}$ is mod 0 constant on the leaves of $W^s$ and $W^u$, that
is, there are null sets $N_s$ and $N_u$ in $S^*_KM$ such that $\phi
\circ p_{T^*X}(y)=\phi \circ p_{S^*X}(x)$ for any $x,y\in
S^*_KM\setminus N_s$, $y\in W^s(x)$ and $\phi \circ p_{T^*X}(z)=\phi
\circ p_{S^*X}(x)$ for any $x,z\in S^*_KM\setminus N_u$, $y\in
W^u(x)$. Consider the foliation $W^{uo}$ whose leaves are the orbits
of the leaves of $W^u$ under the $K\times {\mathbb R}$-action given
by the $K$-action on $S^*_KM$ and the flow $\tilde{f}_t$. By
\cite[Lemma 3.13]{Brin}, this foliation is absolutely continuous. So
we have two transversal absolutely continuous foliations $W^s$ and
$W^{uo}$ of complimentary dimensions and a measurable function $\phi
\circ p_{S^*X}$, which is mod 0 constant on the leaves of $W^s$ and
mod 0 constant on the leaves of $W^{uo}$. By \cite[Proposition
3.12]{Brin}, we conclude that $\phi \circ p_{S^*X}$ is mod 0
constant on $S^*_KM$ and, therefore, $\phi $ is mod 0 constant on
$S^*X$, that completed the proof of Theorem~\ref{t:ergodic}.

\section{Quantum ergodicity}\label{s:quantum}
In this section, we prove Theorems~\ref{t:main1} and \ref{t:main2}.
For this, we apply an abstract approach to quantum ergodicity
developed in \cite{Zelditch96}. We need two results, which maybe of
independent interest.

The first result is the local Weyl law for elliptic operators on
orbifolds. Throughout in this section, we assume that $P$ is a
positive, first-order self-adjoint elliptic pseudodifferential
operator on $X$ with positive principal symbol $p\in S^1(T^*X)$. We
introduce the generalized eigenvalue distribution function of $P$,
setting for any $A\in \Psi^0(X)$
\[
N_A(\lambda)=\operatorname{tr}AE_\lambda=\sum_{\{j:\lambda_j\leq\lambda\}}(A\psi_j,\psi_j),
\quad \lambda \in {\mathbb R},
\]
where as above $\{\psi_j\}$ is an orthonormal basis of
eigenfunctions of $P$ with corresponding eigenvalues $\{\lambda_j\}$
and $E_\lambda$ is the spectral projection of $P$, corresponding to
the semi-axis $(-\infty,\lambda]$.

\begin{thm}\label{t:local-Weyl}
For any $A\in \Psi^0(X)$, we have as $\lambda \to +\infty$
\[
N_A(\lambda)=\frac{1}{(2\pi)^n}\left(\int_{S^*_pX}\sigma_A\,d\mu\right)
\lambda^n+O(\lambda^{n-1}).
\]
\end{thm}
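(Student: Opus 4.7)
The plan is to apply the H\"ormander Fourier--Tauberian method to the distributional trace
\[
u_A(t)=\operatorname{tr}(Ae^{-itP})=\sum_j (A\psi_j,\psi_j)e^{-it\lambda_j},
\]
with the microlocal analysis carried out in orbifold charts, where $P$ lifts to a group-invariant classical pseudodifferential operator on a Euclidean open set. Fix $\rho\in \cS(\RR)$ real-valued with $\hat\rho\in C_c^\infty(\RR)$ supported in a sufficiently small neighborhood of $0$ and with $\hat\rho(0)=1$. By the Tauberian theorem the claimed asymptotic for $N_A(\lambda)$ follows from (i) the a priori polynomial bound $N_A(\lambda)=O(\lambda^n)$, and (ii) the asymptotic expansion, up to order $\lambda^{n-1}$, of
\[
(\rho*dN_A)(\lambda)=\frac{1}{2\pi}\int_{\RR} e^{i\lambda t}\hat\rho(t)\, u_A(t)\,dt.
\]

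Next I would localize using orbifold charts. Cover $X$ by finitely many fundamental charts $(\tilde U_j, G_j, \phi_j)$ and pick a smooth partition of unity $\{\chi_j\}$ subordinate to $\{U_j=\phi_j(\tilde U_j)\}$, with each $\chi_j$ lifting to a $G_j$-invariant function $\tilde\chi_j$ on $\tilde U_j$. Each operator $A\chi_j$ is represented on $\tilde U_j$ by a $G_j$-equivariant pseudodifferential operator $\tilde A_j \tilde\chi_j$. Since $C^\infty(U_j)$ is identified with $C^\infty(\tilde U_j)^{G_j}$ via $\phi_j^*$, the trace on $X$ factors through the isotypic projection, giving
\[
\operatorname{tr}\bigl(A\chi_j e^{-itP}\bigr)=\frac{1}{|G_j|}\sum_{g\in G_j}\operatorname{tr}\bigl(\tilde A_j\tilde\chi_j\, U_g\, e^{-it\tilde P_j}\bigr),
\]
where $U_g$ is the unitary pullback by $g$ on $L^2(\tilde U_j)$ and $\tilde P_j$ is the $G_j$-invariant lift of $P$. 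Summing over $j$ reduces the problem to analyzing the right-hand side.

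For $|t|$ in a small neighborhood of $0$, the propagator $e^{-it\tilde P_j}$ admits the standard H\"ormander Fourier integral parametrix on $\tilde U_j$. The composition $\tilde A_j\tilde\chi_j\, U_g\, e^{-it\tilde P_j}$ is then itself an FIO whose Schwartz kernel, when integrated along the diagonal to form the trace, becomes an oscillatory integral over the graph of $g$. For $g=e$ the classical stationary phase calculation on the full diagonal yields, after Fourier inversion in $t$ and the Tauberian step, the principal term
\[
\frac{1}{(2\pi)^n |G_j|}\Bigl(\int_{S^*_p\tilde U_j}\widetilde{\sigma_A}\,\tilde\chi_j\,d\tilde\mu\Bigr)\lambda^n + O(\lambda^{n-1}).
\]
Summing over $j$ and using that the orbifold Liouville measure $d\mu$ on $S^*_pX$ is defined so that $|G_j|^{-1}(\phi_j)_*\bigl(\tilde\chi_j\,d\tilde\mu\bigr)=\chi_j\, d\mu$ reassembles exactly the principal term stated in the theorem.

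The main obstacle is the non-identity contributions. For $g\ne e$ the trace picks up the kernel restricted to the graph of $g^{-1}$ rather than the diagonal, so stationary phase localizes to the fixed-point submanifold $\tilde U_j^g\subsetneq \tilde U_j$; the key point is to check that the composed phase has a nondegenerate transversal Hessian along $\tilde U_j^g$, uniformly in the chart and in $t\in\supp\hat\rho$, so that stationary phase legitimately produces a contribution of order $O(\lambda^{n-1})$. This is the orbifold counterpart of the singularity analysis at $t=0$ underlying the Duistermaat--Guillemin trace formula for orbifolds of Stanhope--Uribe \cite{Stanhope-Uribe}. The a priori bound $N(\lambda)=O(\lambda^n)$ needed for the Tauberian step is obtained by applying the same argument with $A=I$, for which $N_A=N$ is monotone and the Tauberian inversion applies directly.
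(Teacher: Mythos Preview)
Your proposal is correct but follows a genuinely different route from the paper. The paper proves Theorem~\ref{t:local-Weyl} via generalized zeta functions: it develops orbifold versions of the Kontsevich--Vishik canonical trace $\TR$ and the Wodzicki--Guillemin residue $\tau$, shows that for the holomorphic family $A(z)=AP^{z}$ the function $z\mapsto \TR(AP^{z})$ extends $\zeta_A(z)=\Tr(AP^{z})$ meromorphically with a simple pole at $z=-n$ whose residue is $\tau(AP^{-n})=\int_{S^*_pX}\sigma_A\,p^{-n}$, and then concludes by the formula $\zeta_A(z)=\int_0^\infty\lambda^{z}\,d_\lambda N_A(\lambda)$ together with an Ikehara-type Tauberian theorem. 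In contrast, you work with the wave trace $u_A(t)=\operatorname{tr}(Ae^{-itP})$, localize to fundamental charts, split the trace into group elements via the averaging projector, and run H\"ormander's FIO/stationary-phase argument; the $g=e$ terms give the leading coefficient and the $g\neq e$ terms, being supported on lower-dimensional fixed-point sets, are absorbed into the remainder. The paper explicitly acknowledges this alternative in the remark following the theorem, pointing to \cite{Stanhope-Uribe}.

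What each approach buys: your wave-trace method is the one that genuinely delivers the sharp $O(\lambda^{n-1})$ remainder, since the H\"ormander Tauberian theorem is tailored to that; the zeta-function route is cleaner algebraically and avoids the FIO machinery, but the bare Ikehara theorem by itself yields only the leading asymptotic $N_A(\lambda)\sim c\lambda^n$, so extracting the stated remainder from the paper's argument implicitly requires additional input. Your handling of the $g\neq e$ terms is the right idea---effectiveness of the orbifold ensures $\dim\tilde U_j^{g}<n$, and the transversal nondegeneracy you flag is exactly the content of the $t=0$ singularity analysis in \cite{Stanhope-Uribe}---so the sketch is sound, though in a full write-up you would want to spell out that nondegeneracy check rather than defer to it.
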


\begin{rem}
As a consequence of Theorem~\ref{t:local-Weyl}, we obtain Weyl's law
on counting eigenvalues, first proved by Farsi \cite{Farsi2001} (see
also \cite{Stanhope-Uribe}): as $\lambda \to +\infty$
\[
N(\lambda)=\frac{1}{(2\pi)^n}vol(S^*_pX) \lambda^n+O(\lambda^{n-1}).
\]
\end{rem}

\begin{rem}
Theorem \ref{t:local-Weyl} can be also proved by studying the
principal term at $t=0$ of the distribution trace
$\operatorname{tr}Ae^{itP}$ as in \cite{Stanhope-Uribe}. In
particular, in \cite{Stanhope-Uribe}, a pointwise Weyl law is
proved: as $\lambda \to +\infty$ we have
\[
\sum_{\{j:\lambda_j\leq\lambda\}}|\psi_j(x)|^2=\frac{B(x)}{(2\pi)^n}|G_{x}|\lambda^n+O(\lambda^{n-1}),
\]
where $B(x)$ is the volume of the domain $\{ \xi \in
T^*_{\tilde{x}}\tilde{U} : \tilde{p}(\tilde{x},\xi)\leq 1\}$,
$(\tilde{U},G_x,\phi_U)$ is a fundamental coordinate chart about $x$
with $\phi_U(\tilde{x})=x$.
\end{rem}

The proof of Theorem~\ref{t:local-Weyl} will be given in
Section~\ref{s:local-Weyl}.

The second result is the Egorov theorem for orbifolds proved in
\cite{orbi-Egorov}. As above, let $f_t$ be the Hamiltonian flow of
$p$ on $T^*X$.

\begin{theorem}\label{t:egorov-classical}
For any pseudodifferential operator $A$ of order $0$ with the
principal symbol $\sigma_{A}\in S^0(T^*X)$, the operator
\[
A(t)=e^{itP}Ae^{-itP}
\]
is a pseudodifferential operator of order $0$. Moreover, its
principal symbol $\sigma_{A(t)}\in S^0(T^*X)$ is given by
\[
\sigma_{A(t)}(x,\xi)=\sigma_{A}(f_t(x,\xi)), \quad (x,\xi)\in
T^*X\setminus 0.
\]
\end{theorem}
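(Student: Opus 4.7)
The plan is to mimic the classical proof of Egorov's theorem on smooth manifolds, namely: construct a candidate pseudodifferential operator $B(t) \in \Psi^0(X)$ whose principal symbol is $\sigma_A \circ f_t$, and then show via a Duhamel-type argument that the difference $A(t) - B(t)$ lies in $\Psi^{-1}(X)$. The key driving identity is, formally,
\[
\frac{d}{dt} A(t) = i[P, A(t)],
\]
whose principal-symbol-level content is the Hamiltonian transport equation $\partial_t \sigma_{A(t)} = \{p, \sigma_{A(t)}\}$, integrated by $\sigma_A \circ f_t$.

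I would construct the candidate $B(t)$ locally in orbifold charts. Fix a fundamental chart $(\tilde{U}, G_U, \phi_U)$; by part (2) of the definition of $\Psi^m(X)$, lift $P$ and $A$ to $G_U$-invariant operators $\tilde P, \tilde A$ on $\tilde U$, with $G_U$-invariant principal symbols $\tilde p, \sigma_{\tilde A}$. Let $\tilde f_t$ denote the Hamiltonian flow of $\tilde p$ on $T^*\tilde U \setminus 0$; by $G_U$-invariance of $\tilde p$ this flow commutes with $G_U$, so $\sigma_{\tilde A}\circ \tilde f_t$ is again $G_U$-invariant and descends. Quantize to obtain $\tilde B(t) \in \Psi^0(\tilde U)^{G_U}$, restrict to $G_U$-invariant functions, and push forward to $U$. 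Transition functions between charts preserve principal symbols by the general theory, so the collection $\{\tilde B(t)\}$ glues to a well-defined $B(t) \in \Psi^0(X)$ whose principal symbol is $\sigma_A \circ f_t$ (noting here that $f_t$ is indeed a flow on $T^*X$ by Theorem~\ref{t:flow} applied to the Hamiltonian vector field of $p$).

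To compare $A(t)$ with $B(t)$, write
\[
A(t) - B(t) = \int_0^t e^{i(t-s)P}\bigl(i[P, B(s)] - \dot B(s)\bigr) e^{-i(t-s)P}\, ds.
\]
By construction $B(s)$ solves the transport equation at the principal symbol level, so $i[P, B(s)] - \dot B(s) \in \Psi^{-1}(X)$; conjugation by unitaries preserves the $L^2$-Sobolev order, and iterating the argument at successively lower orders yields $A(t) - B(t) \in \Psi^{-1}(X)$, which is enough to conclude that $A(t) \in \Psi^0(X)$ with the claimed principal symbol.

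The main obstacle is ensuring that the chart-by-chart quantization can be glued consistently, i.e.\ that the local $G_U$-invariant parametrices agree modulo lower-order terms under the embeddings $\lambda_U:(\tilde{W}, G_W, \phi_W)\hookrightarrow(\tilde{U}, G_U, \phi_U)$. A conceptually cleaner route that sidesteps this bookkeeping is to use a global quotient presentation $X \cong M/K$ from Proposition~\ref{p:quotient}: lift $P$ and $A$ to $K$-invariant operators on $M$ whose symbols, restricted to the conormal bundle $T^*_K M$, correspond to $p$ and $\sigma_A$; apply the classical Egorov theorem on the manifold $M$; and descend along the symplectic reduction $T^*_K M/K \cong T^*X$, using that the reduced flow on $T^*X$ is exactly $f_t$ as discussed after Theorem~\ref{t:flow}. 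This is the approach taken in \cite{orbi-Egorov}.
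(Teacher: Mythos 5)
The paper itself does not prove Theorem~\ref{t:egorov-classical}; it states the result and refers to \cite{orbi-Egorov} for the proof. So there is no in-paper argument to compare against. Your proposal, however, correctly identifies the two natural strategies, and you yourself observe that the second one (global quotient presentation plus symplectic reduction) is the one carried out in \cite{orbi-Egorov}. That observation is accurate: the descent from a classical Egorov theorem on the manifold $M$ to $T^*_KM/K \cong T^*X$, with $f_t$ arising as the reduced flow, is exactly the mechanism set up by the paper's discussion after Theorem~\ref{t:flow}, and it is the correct route.

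A caution about your first (chart-by-chart) variant as written: you take $\tilde{f}_t$ to be ``the Hamiltonian flow of $\tilde p$ on $T^*\tilde U \setminus 0$'' and then form $\sigma_{\tilde A}\circ \tilde f_t$. This does not quite make sense, because the Hamiltonian flow of $\tilde p$ on the open set $T^*\tilde U$ is only a local flow; trajectories leave the chart in finite time, so $\sigma_{\tilde A}\circ\tilde f_t$ is not defined on all of $T^*\tilde U$ for $t$ away from $0$. What one actually does in the chart-by-chart picture is first note that $\sigma_A\circ f_t$ is a globally defined element of $S^0(T^*X)$ (using the \emph{global} flow $f_t$ on $T^*X$, supplied by Theorem~\ref{t:flow}), pull this global function back to each chart, and then quantize and glue via a partition of unity; one never runs the flow inside a single chart. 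Your Duhamel step also needs the usual refinement: to conclude $A(t)\in\Psi^0$ one builds $B(t)$ as a full asymptotic series $B_0(t)+B_{-1}(t)+\cdots$ solving the transport hierarchy to infinite order, so that $\dot B(t)-i[P,B(t)]\in\Psi^{-\infty}$ and the Duhamel remainder is genuinely smoothing; stopping at one order only gives $A(t)-B_0(t)$ bounded, not pseudodifferential. Neither of these is a fatal objection — they are the standard bookkeeping in any proof of Egorov's theorem — but as phrased your first route contains a real gap, and your own choice to fall back on the quotient-presentation argument is the right call. For the second route one should also keep in mind that the $K$-invariant lift $\tilde P\in\Psi^1(M)^K$ of $P$ is determined only up to smoothing on $L^2(M)^K$ and only needs to have principal symbol agreeing with $p$ along $T^*_KM$; one then checks that the off-$T^*_KM$ behaviour does not contaminate the restriction of $e^{it\tilde P}\tilde Ae^{-it\tilde P}$ to $K$-invariant functions. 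That verification is the content of \cite{orbi-Egorov} and is where the real work lies.
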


Now let $\mathcal A$ be the $C^*$ closure of the algebra $\Psi^0(X)$
of zeroth order pseudodifferential operators on $X$ acting in
$L^2(X)$. Define the automorphisms $\alpha_t^P$ of $\mathcal A$ by
\[
\alpha_t^P(A)=e^{itP}Ae^{-itP}.
\]
Then we have a $C^*$ dynamical system $(\mathcal A, \mathbb R,
\alpha)$.

We set for $A\in \mathcal A$
\[
\omega_j(A)=\langle A\psi_j, \psi_j\rangle,\quad \omega_\lambda
=\frac{1}{N(\lambda)}\sum_{\lambda_j\leq\lambda}\omega_j.
\]
Each $\omega_j$ is a normal invariant ergodic state of $(\mathcal A,
\mathbb R, \alpha)$. By Theorem~\ref{t:local-Weyl},
$\omega_\lambda\to \omega$ weakly as $\lambda\to\infty$. Therefore,
the system $(\mathcal A, \mathbb R, \alpha)$ is a quantized
Gelfand-Naimark-Segal system in the sense of \cite{Zelditch96}. By
Theorem~\ref{t:egorov-classical}, the classical limit system is the
$C^*$-dynamical system $(C(S^*_pX), \mathbb R, f_t^*)$, where
$f_t^*$ is the flow on the commutative $C^*$-algebra $C(S^*_pX)$
induced by the bicharacteristic flow $f_t$ on $S^*_pX$. The
classical limit system is abelian, so $(\mathcal A, \mathbb R,
\alpha)$ is a quantized abelian system. Thus, Theorems~\ref{t:main1}
and \ref{t:main2} are direct consequences of Theorems 1 and 2 in
\cite{Zelditch96}, respectively.

\section{The local Weyl law}\label{s:local-Weyl}
The goal of this section is to prove the local Weyl law, Theorem
\ref{t:local-Weyl}. We will use an approach based on the generalized
zeta-functions of elliptic operators (cf. \cite{Co-M}). Observe that
the zeta functions of elliptic operators on orbifolds were studied
in detail in \cite{Bucicovschi}. We slightly extend the results of
\cite{Bucicovschi}, introducing the orbifold analogues of the
canonical Kontsevich-Vishik trace and the Wodzicki-Guillemin
noncommutative residue.

\subsection{The canonical trace}
First, we describe an orbifold analogue of the canonical trace on
pseudodifferential operators introduced by Kontsevich and Vishik
\cite{KV1,KV2}.

For any $\sigma\in C^{\infty}({\mathbb R}^n\backslash \{0\})$,
homogeneous of degree $n$, i.e. such that
$\sigma(\lambda\eta)=\lambda^n\sigma(\eta)$ for any $\eta\not= 0$
and $\lambda \in {\mathbb R}^*_+$, let
$$
S({\sigma})=\int_{|\eta|=1} \sigma(\eta)d\eta.
$$

For any function $\phi$ on ${\mathbb R}^n\backslash \{0\}$, let
$$
\phi_{\lambda}(\eta)=\lambda^n\phi(\lambda\eta),\quad \lambda>0,
\eta\in {\mathbb R}^n\backslash \{0\}.
$$

Recall the following fact on continuation of a homogeneous smooth
function on ${\mathbb R}^n\backslash \{0\}$ to a homogeneous
distribution in ${\mathbb R}^n$, see \cite{Ho}, Theorems 3.2.3 and
3.2.4.
\begin{lemma}
\label{hom} Let $\sigma\in C^{\infty}({\mathbb R}^n\backslash
\{0\})$ be homogeneous of order $d$ in $\eta \in {\mathbb R}^n$.
\medskip
\par
\noindent (1)\ If $d\not \in \{-n-k:k\in {\mathbb N}\}$, $\sigma$
extends to a homogeneous distribution $\tau$ on ${\mathbb R}^n$.
\medskip
\par
\noindent (2)\ If $n=-q-k$, there is an extension $\tau$ of
$\sigma$, satisfying the condition
$$
(\tau,\phi)=\lambda^{-n-k}(\tau,\phi_{\lambda})+ \log \lambda
\sum_{|\alpha|=k} S(\eta^{\alpha}\sigma)
\partial_{\eta}^{\alpha}\phi(0)/\alpha!, \alpha>0.
$$
In particular, the obstruction to the existence of an extension
$\tau\in {\mathcal D}'({\mathbb R}^n)$, homogeneous in $\eta$, is
given by $S(\eta^{\alpha}\sigma)$, $|\alpha|=k$.
\end{lemma}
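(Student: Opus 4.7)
The plan is to follow the standard Hörmander-style construction, extending $\sigma$ by a Taylor subtraction against a smooth cutoff and splitting into cases according to whether the degree $d$ lies on the exceptional lattice $\{-n-k : k \in \mathbb{N}\}$.

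For part (1), I would first dispose of the easy range: if $\operatorname{Re} d > -n$ the function $\sigma$ is locally integrable, so the natural pairing
\[
(\tau,\phi) = \int_{\mathbb{R}^n}\sigma(\eta)\phi(\eta)\,d\eta, \quad \phi \in C^\infty_c(\mathbb{R}^n),
\]
already defines a homogeneous distribution of degree $d$. For the remaining range, fix $\chi \in C^\infty_c(\mathbb{R}^n)$ with $\chi \equiv 1$ near $0$, choose the smallest $k \geq 0$ with $\operatorname{Re} d + k > -n$, and put
\[
(\tau,\phi) = \int \sigma(\eta)\Bigl(\phi(\eta) - \chi(\eta)\sum_{|\alpha|<k}\frac{\eta^\alpha}{\alpha!}\partial_\eta^\alpha\phi(0)\Bigr) d\eta + \sum_{|\alpha|<k} c_\alpha \partial_\eta^\alpha\phi(0).
\]
The integrand is integrable thanks to Taylor cancellation near $0$ and the decay of $\phi$ at infinity. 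Pairing $\tau$ with $\phi_\lambda$ and using $u = \lambda\eta$ in the integral, one derives a linear system that determines each $c_\alpha$ from the condition of homogeneity; this system is solvable precisely when $d + |\alpha| \neq -n$ for all $|\alpha| < k$, which is exactly the hypothesis ruling out the exceptional lattice.

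For part (2), the same subtraction procedure produces an extension $\tau$, but now one of the constraints on $c_\alpha$ is genuinely obstructed by the nonzero moment $S(\eta^\alpha \sigma)$ with $|\alpha| = k$. Tracking the change of variables carefully, the obstructed coefficient is the residue of $\lambda^{d+|\alpha|+n}$ at $d = -n-k$ and $|\alpha| = k$, which is a logarithm; this produces precisely the term $\log\lambda \cdot \sum_{|\alpha|=k} S(\eta^\alpha\sigma) \partial_\eta^\alpha\phi(0)/\alpha!$ in the scaling identity, yielding the displayed formula.

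The only real difficulty is the bookkeeping needed to check that the coefficient of $\log\lambda$ is exactly $\sum_{|\alpha|=k} S(\eta^\alpha\sigma)\partial_\eta^\alpha\phi(0)/\alpha!$ and not merely proportional to it, which requires keeping careful track of how the cutoff $\chi$ and the Taylor remainder interact under dilation. Since the statement is a textbook fact about homogeneous distributions, I would in the end defer to the detailed proofs of Theorems 3.2.3 and 3.2.4 in \cite{Ho} rather than rewrite them in full.
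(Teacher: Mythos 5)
Your Taylor-subtraction sketch is correct in outline, and you rightly identify the logarithmic obstruction at the exceptional degrees $d=-n-k$ as coming from the nonvanishing moments $S(\eta^\alpha\sigma)$ with $|\alpha|=k$. The paper does not actually prove this lemma---it simply quotes H\"ormander's Theorems 3.2.3 and 3.2.4 from \cite{Ho}---and since your proposal ultimately also defers to that same reference for the bookkeeping, there is no substantive difference to report.
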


Let $L$ be the functional given by
$$
L(\sigma)=(2\pi)^{-n}\int_{{\mathbb R}^n} \sigma(\eta)d\eta,
$$
which is well-defined on symbols $\sigma\in S^m({\mathbb R}^n)$ of
order $m<-n$.

\begin{lemma}[\cite{KV1}]
The functional $L$ has an unique holomorphic extension $\tilde{L}$
to the space of classical symbols $S^z({\mathbb R}^n)$ of
non-integral order $z$. The value of $\tilde{L}$ on a symbol
$\sigma\sim \sum \sigma_{z-j}$ is given by
$$
\tilde{L}(\sigma)= (2\pi)^{-n}\int_{{\mathbb R}^n} (\sigma
-\sum_{j=0}^N \tau_{z-j}) d\eta,
$$
where $\tau_{z-j}$ is the unique homogeneous extension of
$\sigma_{z-j}$, given by Lemma~\ref{hom}, $N\geq {\rm Re}\,z +n$.
\end{lemma}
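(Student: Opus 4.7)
The plan is to establish five facts about the candidate formula: (i) the right-hand side is a well-defined complex number, (ii) its value is independent of the truncation index $N\geq\Re z+n$, (iii) it coincides with $L(\sigma)$ on the half-plane $\Re z<-n$, (iv) it depends holomorphically on $z$ on the open set of non-integral orders, and (v) it is the unique such extension of $L$. The essential input is Lemma~\ref{hom}, which supplies the distributional extensions $\tau_{z-j}\in\cD'(\RR^n)$ precisely when $z-j\notin\{-n-k:k\in\NN\}$, a condition guaranteed for every $j$ by the non-integrality of $z$.

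For (i), I would fix a smooth cutoff $\chi\in C_c^\infty(\RR^n)$ with $\chi\equiv 1$ near the origin and decompose $\tau_{z-j}=\chi\tau_{z-j}+(1-\chi)\sigma_{z-j}$; the second summand is a smooth function on $\RR^n$ agreeing with the homogeneous function $\sigma_{z-j}$ off the support of $\chi$. Splitting the integrand accordingly, the part $\sigma-(1-\chi)\sum_{j=0}^N\sigma_{z-j}$ is a classical symbol of order at most $z-N-1<-n$ and hence lies in $L^1(\RR^n)$, while the remaining compactly supported distribution pairs unambiguously with the constant $1$.

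For (ii) and (iii) I would use the fact that for a homogeneous distribution $\tau$ of degree $\alpha\neq -n$ the regularized integral $\int\tau\,d\eta$ vanishes — a one-line consequence of comparing $\tau(\lambda\,\cdot)=\lambda^\alpha\tau$ with the change of variables. Applied to $\tau_{z-(N+1)}$ this yields independence of $N$, and applied to each $\tau_{z-j}$ in the range where every $\sigma_{z-j}$ is integrable it collapses the formula to $(2\pi)^{-n}\int\sigma\,d\eta=L(\sigma)$. Holomorphy (iv) follows because Lemma~\ref{hom} realizes $z\mapsto\tau_{z-j}$ as a $\cD'(\RR^n)$-valued holomorphic function off the exceptional set, so the integrand depends holomorphically on $z$ and dominated convergence transfers this to the integral. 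Uniqueness (v) is then the identity principle applied to two holomorphic extensions that agree on the half-plane $\Re z<-n$.

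The main obstacle, and the reason the non-integral order hypothesis is essential, is the vanishing of the regularized integral of a homogeneous distribution of degree $\neq -n$. At the exceptional orders $z-j=-n-k$ the logarithmic term in Lemma~\ref{hom}(2) obstructs the existence of a homogeneous extension and, equivalently, forces a non-zero regularized integral, so the same construction would exhibit simple poles rather than extend holomorphically. Checking carefully that the non-integrality hypothesis on $z$ avoids all values $z-j$ as $j$ runs over $\NN$, and packaging the dilation argument cleanly so that the auxiliary cutoff $\chi$ drops out, is the one place that demands precision.
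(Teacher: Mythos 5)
The paper itself gives no proof of this lemma; it is quoted directly from Kontsevich--Vishik \cite{KV1}. Your reconstruction is correct and, modulo presentation, is the standard Kontsevich--Vishik argument: the cutoff decomposition $\tau_{z-j}=\chi\tau_{z-j}+(1-\chi)\sigma_{z-j}$ making the regularized integral well defined; the dilation identity $\int\tau=\lambda^{n+\alpha}\int\tau$ forcing the regularized integral of a homogeneous distribution of degree $\alpha\neq -n$ to vanish, which yields both the independence of $N$ and the agreement with $L$ on ${\rm Re}\,z<-n$; and the identity principle for uniqueness. One small gap to flag in step (iv): Lemma~\ref{hom} as stated asserts only the \emph{existence} of the homogeneous extensions $\tau_{z-j}$, not their holomorphic dependence on the degree, so ``Lemma~\ref{hom} realizes $z\mapsto\tau_{z-j}$ as a $\cD'(\RR^n)$-valued holomorphic function'' is reading more into the lemma than it says. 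The holomorphy is indeed part of H\"ormander's construction (see \cite{Ho}, Theorem 3.2.4, which exhibits the extension as a meromorphic family with poles exactly at $-n-k$), and one should cite that result explicitly for this step; with that citation supplied, the rest of your argument goes through.
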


Let $(\tilde{U}, G_U, \phi_U)$ be a fundamental orbifold chart on
$X$ and $A\in\Psi^{m}(\tilde{U})$, $m<-n$, is a $G_U$-invariant
pseudodifferential operator on $\tilde{U}$ with complete symbol
$k\in S^m(\tilde{U}\times {\mathbb R}^n)$. Its trace is given by the
formula
$$
\tr(A)=(2\pi)^{-n}\frac{1}{|G_U|}\int_{\tilde{U}\times {\mathbb
R}^n} k(x,\xi)\, dx\, d\xi.
$$
The following formula provides an extension of the trace functional
to any pseudodifferential operator $A\in \Psi^{z}(\tilde{U})$ of
arbitrary non-integral order $z\in {\mathbb C}\backslash {\mathbb
Z}$:
\[
\TR(A)=\frac{1}{|G_U|} \int_{\tilde{U}} \tilde{L}(k(x,\cdot))\,dx\,.
\]
This definition can be extended to all operators $P\in \Psi^{z}(X),
z\in {\mathbb C}\backslash {\mathbb Z}$.

Using \cite{KV1,KV2}, we immediately obtain the following statement.

\begin{prop}
The linear functional $\TR$ on the class $\Psi^{m+{\mathbb Z}}(X),
m\in {\mathbb C}\backslash {\mathbb Z}$ of classical
pseudodifferential operators of orders $\alpha\in m+{\mathbb Z}$ has
the following properties:
\medskip
\par
\noindent (1) It coincides with the usual trace $\tr$ for ${\rm
Re}\, \alpha<-n$.
\medskip
\par
\noindent (2) It is a trace functional, i.e. $\TR([A,B])=0$ for any
$A\in \Psi^{\alpha_1}(X)$ and $B\in \Psi^{\alpha_2}(X)$,
$\alpha_1+\alpha_2\in m+{\mathbb Z}$.
\end{prop}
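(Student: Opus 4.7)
The plan is to reduce both assertions to the corresponding Kontsevich--Vishik results for $G_U$-invariant pseudodifferential operators on open sets $\tilde U\subset\RR^n$, exploiting the fact that the fundamental orbifold charts already incorporate all the new structure (finite isotropy) via the factor $|G_U|^{-1}$. The first step is to globalize the chart-wise definition. I would choose a locally finite cover of $X$ by fundamental orbifold charts $(\tilde U_i,G_i,\phi_i)$ with a subordinate smooth partition of unity $\{\chi_i\}$, split $A=\sum_{i,j}\chi_i A \chi_j$, and define $\TR(A)$ as the sum of the corresponding chart contributions. The well-definedness (independence of the cover, the partition of unity, and the choice of representative $\tilde A$ in each chart) follows from the diffeomorphism invariance of $\tilde L$ on manifolds proved in \cite{KV1}, combined with the observation that the transition between two fundamental charts factors through an intermediate chart $(\tilde W,G_W,\phi_W)$ with equivariant embeddings into both, so that the factors $|G_i|^{-1}$ reassemble correctly via averaging over the finite groups.

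Property (1) is then immediate from the defining formula. When $\mathrm{Re}\,\alpha<-n$ the complete symbol $k(x,\cdot)$ lies in $L^1(\RR^n)$ uniformly in $x$, so by uniqueness of the holomorphic extension
\[
\tilde L(k(x,\cdot))=L(k(x,\cdot))=(2\pi)^{-n}\int_{\RR^n} k(x,\xi)\,d\xi.
\]
Substituting into the chart formula for $\TR$ reproduces the standard orbifold trace of a $G_U$-invariant smoothing-enough operator, and summing over the partition of unity yields $\TR(A)=\tr(A)$.

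For property (2), I would use the classical holomorphic family argument. Since $P$ is positive self-adjoint elliptic of order one, the complex powers $P^{-z}$ form a holomorphic family of classical pseudodifferential operators on $X$ of order $-z$, so $A(z):=A P^{-z}$ lies in $\Psi^{\alpha_1-z}(X)$ and depends holomorphically on $z$. The function
\[
F(z):=\TR([A(z),B])
\]
is holomorphic on $\{z\in\CC:\alpha_1+\alpha_2-z\notin\ZZ\}$; in particular $F$ is holomorphic in a neighborhood of $z=0$, because $\alpha_1+\alpha_2\in m+\ZZ$ is non-integral. For $\mathrm{Re}\,z>\alpha_1+\alpha_2+n$ (and $z\notin m+\ZZ$), both $A(z)B$ and $BA(z)$ are trace class, so property (1) together with cyclicity of the ordinary trace gives $F(z)=\tr([A(z),B])=0$. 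Analytic continuation then forces $F\equiv 0$ on its entire domain of holomorphy, and evaluating at $z=0$ yields $\TR([A,B])=0$.

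The main obstacle is the globalization step in the first paragraph: in the orbifold setting one must check that the local combinatorial factors $|G_U|^{-1}$ interact correctly with the symbol-level transition formulas, and that the \emph{partie finie} construction underlying $\tilde L$ is compatible with equivariant changes of chart. This reduces, via averaging over the finite groups $G_W\to G_U$ and $G_W\to G_V$ through the intermediate chart, to the diffeomorphism invariance of $\tilde L$ already established in \cite{KV1,KV2}; the remaining verification is bookkeeping rather than new analysis, and once done all results of \cite{KV1,KV2} used above transfer verbatim.
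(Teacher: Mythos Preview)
Your proposal is correct and follows essentially the same approach as the paper, which simply states that the result is an immediate consequence of the Kontsevich--Vishik theory \cite{KV1,KV2}. You give considerably more detail than the paper does---in particular spelling out the holomorphic-family/analytic-continuation argument for the trace property (2), which is exactly the mechanism underlying the cited results---but the underlying strategy (reduce chartwise to the $G_U$-invariant manifold case and invoke \cite{KV1,KV2}) is the same.
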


\subsection{The noncommutative residue}
Now let us turn to an orbifold analogue of the Wodzicki-Guillemin
noncommutative residue \cite{Gu85,Wo}.

Let $(\tilde{U}, G_U, \phi_U)$ be a fundamental orbifold chart on
$X$. For a $G_U$-invariant pseudodifferential operator
$A\in\Psi^{m}(\tilde{U})$, we define its residue form $\rho_A$ on
${\tilde{U}\times {\mathbb R}^n}$ as
\[
\rho_A = k_{-n}(x,\xi)\, dx\,d\xi,
\]
and its residue trace $\tau(A)$ as
\[
\tau(A)=\frac{1}{|G_U|} \int_{|\xi|=1} k_{-n}(x,\xi)\, dx\,d\xi.
\]
It can be easily checked that, for any $A\in \Psi^{m}(X)$, its
locally defined residue forms $\rho_A$ give rise to a well-defined
volume form on $$T^*X\setminus \{0\}$$, and the residue trace
$\tau(A)$ is given by integration of the residue form $\rho_A$ over
$S^*_pX$:
\[
\tau(A)=\int_{S^*_pX}\rho_A.
\]

Now we describe a relation between the canonical trace and the
noncommutative residue. First, recall that a family $A(z)\in
\Psi^{f(z),-\infty}(\tilde{U})$ is holomorphic (in a domain
$D\subset {\mathbb C}$), if the order $f(z)$ is a holomorphic
function, and the complete symbol $k(z)\in
S^{f(z),-\infty}(\tilde{U}\times {\mathbb R}^{n})$ of $A(z)$ is
represented as an asymptotic sum
$$
k(z,x,\xi)\sim \sum_{j=0}^{\infty} \theta(\xi) k_{f(z)-j}(z,x,\xi),
$$
which is uniform in $z$, with homogeneous components
$k_{f(z)-j}(z,x,\xi)$, holomorphic in $z$.

\begin{prop}
\label{p:extension} For any holomorphic family $A(z)\in \Psi^{m +
z}(X), z\in D\subset {\mathbb C}$, the function $z\mapsto \TR(A(z))$
is meromorphic with no more than simple poles at $z_k=-m-n+k\in
D\bigcap {\mathbb Z},k\geq 0$ and with
\[
\res_{z=z_k} \TR(A(z))=\tau(A(z_k)).
\]
\end{prop}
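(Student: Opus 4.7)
The plan is to reduce to a local computation in a fundamental orbifold chart via a partition of unity and then apply the classical Kontsevich--Vishik argument, noting that the orbifold structure contributes only the uniform factor $1/|G_U|$ already present in the definitions of $\TR$ and $\tau$.

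I would first choose a cover of $X$ by fundamental orbifold charts $(\tilde U_i,G_{U_i},\phi_{U_i})$ with a subordinate partition of unity $\{\phi_i\}$ and cutoffs $\psi_i\in C_c^\infty(U_i)$ such that $\psi_i\equiv 1$ on $\supp\phi_i$. Writing $A(z) = \sum_i \phi_i A(z)\psi_i + R(z)$, the family $R(z)$ is smoothing (its Schwartz kernel vanishes near the diagonal), so $\TR(R(z)) = \tr R(z)$ is entire in $z$. It therefore suffices to analyze each $\TR(\phi_i A(z)\psi_i)$, which by definition equals $\frac{1}{|G_{U_i}|}\int_{\tilde U_i}\tilde L(k(z,x,\cdot))\,dx$ for the lifted complete symbol $k(z,x,\xi)$.

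In such a chart, expand $k(z,x,\xi)\sim\sum_{j\geq 0}\theta(\xi)\,k_{m+z-j}(z,x,\xi)$ and truncate at order $N\gg\operatorname{Re}(z)$; the remainder integrates to a function holomorphic in $z$. Each homogeneous term factorizes in polar coordinates as
\[
\int\theta(\xi)\,k_{m+z-j}(z,x,\xi)\,d\xi = S_j(z,x)\,I_j(z),
\]
with $S_j(z,x) = \int_{|\omega|=1}k_{m+z-j}(z,x,\omega)\,d\omega$ entire in $z$ and $I_j(z) = \int_0^\infty\theta(r)\,r^{m+z-j+n-1}\,dr$ meromorphic with a single simple pole at $z = -m-n+j$ of residue $-1$. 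A parallel treatment of the integrals $\int\tau_{m+z-j}\,d\xi$ entering $\tilde L$ shows that the net pole of $\tilde L(k(z,x,\cdot))$ at $z_k = -m-n+k$ is simple and is contributed by the $j = k$ term alone.

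At $z = z_k$ the homogeneity degree $m+z_k-k$ equals exactly $-n$, so $k_{m+z_k-k}(z_k,\cdot) = k_{-n}(z_k,\cdot)$. Integrating over $x\in\tilde U$ and reassembling the partition-of-unity contributions yields
\[
\res_{z=z_k}\TR(A(z)) = \frac{1}{|G_U|}\int_{\tilde U}\int_{|\xi|=1}k_{-n}(z_k,x,\xi)\,d\xi\,dx = \tau(A(z_k)),
\]
matching the stated residue formula. The main technical hurdle is verifying that the higher-order terms with $j>k$ (for which $m+z_k-j<-n$) contribute only distributional residues supported at the origin whose integrated contributions cancel; this is the standard Kontsevich--Vishik check that only the $(-n)$-homogeneous component survives in the residue, and it carries over verbatim to the orbifold setting thanks to the uniform $1/|G_U|$ factor present throughout the definitions of $\TR$, $\tr$, and $\tau$.
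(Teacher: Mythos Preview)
Your proposal is correct and follows the same route the paper has in mind: the paper's proof is a one-line citation of the classical Kontsevich--Vishik result \cite{KV1,KV2} (and \cite{Bucicovschi} for the orbifold setting), and what you have written is precisely a sketch of that argument --- localize by a partition of unity to fundamental charts, observe that the orbifold structure enters only through the factor $1/|G_U|$ already built into both $\TR$ and $\tau$, and then run the standard homogeneous-component analysis. One minor caution: your radial integral $I_j(z)=\int_0^\infty\theta(r)\,r^{m+z-j+n-1}\,dr$ as written diverges at infinity when $\theta\equiv 1$ for large $r$, so the pole really emerges only after you pair this term with the subtraction of the homogeneous extension $\tau_{m+z-j}$ inside $\tilde L$; you acknowledge this in the next sentence, but it would be cleaner to combine the two from the outset.
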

\begin{proof} The proposition is an immediate consequence of the similar
fact for usual pseudodifferential operators \cite{KV1,KV2} (see also
\cite{Bucicovschi}).
\end{proof}

\subsection{Proof of Theorem \ref{t:local-Weyl}}
Let $P$ be a positive, first-order self-adjoint elliptic
pseudodifferential operator on $X$ with positive principal symbol
$p\in S^1(T^*X)$ and $A\in \Psi^{0}(X)$. Then we have a holomorphic
family $A(z)=AP^z\in \Psi^{z}(X), z\in {\mathbb C}$ (see
\cite{Bucicovschi}). By Proposition \ref{p:extension}, we obtain
that the function $z\mapsto \TR(AP^z)$ is meromorphic with no more
than simple poles at $z_k=-n+k,k\geq 0$. In particular, for ${\rm
Re}\,z< -n$, the function $z\mapsto \TR(AP^z)$ is holomorphic and
\[
\TR(AP^z)=\Tr(AP^z).
\]
Therefore, $\TR(AP^z)$ gives a meromorphic extension of the
generalized zeta function $\zeta_A(z)=\Tr(AP^z)$ to the complex
plane. Moreover, by Proposition \ref{p:extension}, we have
\[
\res_{z=-n} \TR(AP^z)=\tau(AP^{-n}).
\]
By definition, the residue form $\rho_{AP^{-n}}$ is given by
\[
\rho_{AP^{-n}}=d_{-n}(x,\xi)\, dx\,d\xi,
\]
where $d_{-n}$ is the homogeneous component of degree $-n$ of the
complete symbol of the operator $AP^{-n}$. Since $AP^{-n}\in
\Psi^{-n}(X)$, $d_{-n}$ coincides with the principal symbol of
$AP^{-n}$. Therefore $d_{-n}$ is invariantly defined as a function
on $T^*X\setminus \{0\}$ and equals
\[
d_{-n}=\sigma_A p^{-n}.
\]
The proof is completed by using the formula
\[
\zeta_A(z)=\int_0^{+\infty}\lambda^z\,d_\lambda N_A(\lambda)
\]
and the Ikehara Tauberian theorem (see, for instance,
\cite{Shubin:pdo}).

\end{document}